\numberwithin{equation}{section}
\newcommand{\N}{\mathbb N}
\newcommand{\R}{\mathbb R}
\newcommand{\C}{\mathbb C}
\newcommand*\bigcdot{\mathpalette\bigcdot@{.7}}
\newcommand*\bigcdot@[2]{\mathbin{\vcenter{\hbox{\scalebox{#2}{$\m@th#1\bullet$}}}}}
\newtheorem{theorem}{Theorem}[section]
\newtheorem{lemma}[theorem]{Lemma}
\newtheorem{proposition}[theorem]{Proposition}
\newtheorem{corollary}[theorem]{Corollary}
\newtheorem{remark}[theorem]{Remark}
\begin{document}
\title [Pairs consisting of a Hermitian form and  a self-adjoint antilinear operator]{A canonical form for  pairs consisting of a Hermitian form and  a  self-adjoint antilinear operator}
\date{\today}
\thanks{I.\ Zelenko is supported by Simons Foundation Collaboration Grant for Mathematicians 524213.}

\author{David Sykes}
\address{David Sykes,
	Department of Mathematics
	Texas A\&M University
	College Station
	Texas, 77843
	USA}\email{ dgsykes@math.tamu.edu}
\urladdr{\url{http://www.math.tamu.edu/~dgsykes}}

\author{Igor Zelenko}
\address{Igor Zelenko, Department of Mathematics
	Texas A\&M University
	College Station
	Texas, 77843
	USA}\email{ zelenko@math.tamu.edu}
\urladdr{\url{http://www.math.tamu.edu/~zelenko}}

\subjclass[2010]{15A21, 15A24, 15B05, 15A22,  32V40}
\keywords{antilinear operators, indefinite Hermitian forms, canonical forms, uniformly Levi degenerate CR structures, pencils}	
	\begin{abstract} Motivated by a problem in local differential geometry of Cauchy--Riemann (CR) structures of hypersurface type, we find  a canonical form for pairs consisting of a nondegenerate Hermitian form and a self-adjoint antilinear operator, or, equivalently,  consisting of a nondegenerate Hermitian form and  a symmetric bilinear form. This generalizes the only previously known results on simultaneous normalization of such pairs, namely, the results of \cite{benedetti1984simultaneous}  on simultaneous diagonalization of these pairs in the case where the  Hermitian form is positive definite and of \cite{hong1986reduction}, where a criterion for simultaneous diagonalization is given. 
	\end{abstract}
	
\maketitle

\section{Introduction}\label{fake section place holder}

In the present paper we find canonical forms for pairs consisting of a nondegenerate Hermitian form $\ell$ on a complex $n$-dimensional vector space $W$ and an antilinear operator $A:W\rightarrow W$ that is self-adjoint with respect to the form $\ell$. By a canonical form, as usual, we mean a specified choice of matrices representing elements of any such pair, chosen from among matrix representations in all possible bases of $W$. Our main result is formulated in Theorem \ref{simultaneous canonical form theorem}.  Recall that a map  $A: W\rightarrow W$  is called an antilinear operator if
\[
A(\lambda v+w)=\overline{\lambda}A(v)+A(w) \quad\quad\forall v,w\in W,\lambda\in\C,
\]
and an antilinear operator $A$ is called  self-adjoint with respect to the form $\ell$ or, shortly,  $\ell$-self-adjoint if \begin{equation}
\label{seldef}
\ell(Av,w)=\ell(Aw,v)\quad\quad\forall v,w\in W.
\end{equation}

Our original motivation for this work comes from the local differential geometry of certain Cauchy-Riemann (CR) structures, more precisely of real hypersurfaces of a complex space having  uniformly degenerate Levi form with one dimensional kernel.  As was shown recently in \cite{porter2017absolute}, the basic invariant of such structures at a point is given exactly by a pair of the algebraic objects under consideration. For more details see section \ref{applications} below.
 
Our main result, Theorem \ref{simultaneous canonical form theorem}, also gives  canonical forms for pairs consisting of a nondegenerate Hermitian form and a symmetric bilinear form because the set of these pairs is in bijective correspondence with the one we originally considered. Indeed, to the pair $(\ell, A)$ we can assign the pair $(\ell, \ell')$, where 
\begin{equation}
\label{symmdef}
\ell^\prime(v,w):=\ell(w,Av)
\end{equation}
is a symmetric bilinear form by \eqref{seldef}. From the nondegenericity of $\ell$ it follows that the assignment of $(\ell, A)$  to $(\ell, \ell')$ defines the  bijection between the two sets of pairs under consideration.

Surprisingly, when we encountered the necessity of finding the canonical forms for pairs $(\ell, A)$ in the course of our study in CR geometry, we were not able to find the desired results in the literature. The only results in this direction that we found are those addressing the problem of simultaneous diagonalization \cite{benedetti1984simultaneous, hong1986reduction} and those giving canonical forms for a single antilinear operator \cite{haant, hongthesis, hong1988canonical} and, more generally, for a single semi-linear operator \cite{asano, jacobson} or for a square matrix under $\varphi$-equivalence \cite{hong1991}. In \cite[Theorem 7]{benedetti1984simultaneous}, it shown that $\ell$ and  $A$ can be simultaneously diagonalized if $\ell$ is positive definite, and, in \cite[Theorem 2.1]{hong1986reduction}, the pairs $(\ell, A)$ admitting a simultaneous diagonalization are classified. Perhaps, the main difficulty here is that the matrix representations for a Hermitian form and an antilinear operator transform differently under a change of the basis (see formulas \eqref{transformation} below). It also cannot be reduced to the study of canonical forms of pairs of other objects, wherein the matrix representations of each component of the new pairs transforms in the same way under a basis change. An example of the latter reduction is the set of pairs consisting of a nondegenerate Hermitian form $\ell$ and an  $\ell$-self-adjoint linear operator that was treated in \cite [Theorem 5.1.1]{gohberg2006indefinite} where a canonical form for such pairs is given, which we will refer to as the Gohberg--Lancaster--Rodman form. Although the matrix representations of each component in such pairs transform differently under a basis change, using a process similar to the one in the previous paragraph, we can obtain a bijective correspondence between the set of such pairs and the set of pairs of Hermitian forms (i.e., a pair of the same type of objects), one of which is nondegenerate. In our case, however, such a reduction is not possible and the problem of finding canonical forms cannot be totally reduced to the study of certain classes of matrix pencils, as was classically done using Weierstrass--Kronecker normal forms for matrix pencils (see, for example, \cite{gantmakher1953theory} and \cite{thompson}).

To prove Theorem \ref{simultaneous canonical form theorem}, we develop in section \ref{antilinear operators special cases} a geometric  version of the construction of the canonical form for a single  antilinear operator of \cite{hongthesis} (which was formulated in \cite[Theorem 3.1]{hong1988canonical}, proved in \cite{hongthesis},  and stated for completeness in Remark \ref{Hong--Horn} below) and combine it with a simultaneous normalization of the Hermitian form, which is comparable in certain respects to the method of \cite [subsection 5.3]{gohberg2006indefinite} for obtaining the Gohberg--Lancaster--Rodman form, mentioned in the previous paragraph. By a  geometric version we are referring to the study  of flags of subspaces analogous to the generalized eigenspaces in the standard theory of linear operators as opposed to the algebraic version in \cite{asano, haant, jacobson}  based on the theory of invariant factors  and manipulations with matrices as in \cite{hongthesis, hong1991}.  Our Theorem \ref{simultaneous canonical form theorem} is related to the Hong--Horn canonical form  of \cite[Theorem 3.1]{hong1988canonical} 
for a single  antilinear operator in the same way that the Gohberg--Lancaster--Rodman form in \cite [Theorem 5.1.1]{gohberg2006indefinite} is related to the classical Jordan normal form for linear operators.

In section \ref{alter_section}, for completeness we sketch an alternative approach to the considered problem that leads to an equivalent canonical form, Theorem \ref{alt simultaneous canonical form theorem}. This approach was in fact our original one before we found the more natural and apparently more simple  approach leading to Theorem \ref{simultaneous canonical form theorem}. The idea in this alternative approach is as follows: 
Since $A^2$ is  an  $\ell$-self-adjoint linear operator whenever $A$ is  an  $\ell$-self-adjoint antilinear operator, one can  first bring the pair $(\ell, A^2)$ to the Gohberg--Lancaster--Rodman form and then find a canonical form for $A$ with minimal changes in the form of $\ell$. This requires solving a certain nonlinear matrix equation, which turned out to be feasible.

\section{The Canonical Form}\label{The canonical forms section}
\label{main_result_sec}
As in the introduction, $\ell$ denotes a nondegenerate  Hermitian form and  $A$ denotes an antilinear operator on an $n$-dimensional complex space  $W$.
Unless otherwise stated, $A$ is assumed to be $\ell$-self-adjoint (see \eqref{seldef} for the definition).

Choosing a basis $\{e_1,\ldots,e_n\}$ of $W$, one can represent the form $\ell$ and the antilinear operator $A$ 
by $n\times n$ matrices $H=(H_{i,j})$ and $C=(C_{i,j})$ via a standard construction, requiring, for all $i,j\in\{1,\ldots, n\}$, that
\[
H_{i,j}=\ell(e_j,e_i)\quad\mbox{ and }\quad A(e_i)=\sum_{k=1}^nC_{k,i}e_k .
\]
The conditions that  $\ell$ is a nondegenerate Hermitian form and $A$ is an $\ell$-self-adjoint antilinear operator are equivalent to 
\begin{equation}
\label{cond}
H^*=H,\quad \det H\neq 0, \quad\mbox { and  }\quad (HC)^T=HC,
\end{equation}
respectively.

If one chooses another basis $\{\tilde  e_1,\ldots,\tilde e_n\}$, letting $\widetilde H$ and $\widetilde C$ be the matrices representing the form $\ell$ and the operator $A$ in this new basis and letting $M=(M_{i,j})$ be the transition matrix from the new basis to the old one, 
(i.e.,
$e_j=\sum_{i=1}^n M_{i,j} \tilde e_i,$
) 
then 
\begin{equation}
\label{transformation}
\widetilde H=(M^{-1})^* H M^{-1}  \quad\mbox{ and }\quad \widetilde C= M C \overline M^{-1}.
\end{equation}
Our goal is  to find a basis in which the matrix representation of the form $\ell$ and operator $A$ has a particularly simple form. In other words, if we define an action of the matrix group $GL_n(\mathbb C)$ on the pairs $(H, C)$ of $n\times n$ matrices satisfying \eqref{cond} by the mapping 
\begin{equation}
\label{action}
\bigl(M, (H, C)\bigr)\mapsto \Bigl(\left(M^{-1}\right)^* H M^{-1},  M C \overline M^{-1}\Bigr),\quad M\in GL_n(\mathbb C),
\end{equation}
then our goal is to choose a representative in each orbit of this action in a canonical way. This canonical representative is usually called the \emph{canonical}  or \emph{normal} form of the pair $(\ell, A)$.

 We let  $T_k$ be the $k\times k$ matrix whose $(i,j)$ entry is 1 if $j-i=1$ and zero otherwise, let $S_k$ be the $k\times k$ matrix whose $(i,j)$ entry is 1 if $j+i=k+1$ and zero otherwise,  let $I_k$ be the rank $k$ identity matrix, and let $J_{\lambda,k}=\lambda I_k+T_k$ be the standard $k\times k$ Jordan block corresponding to the eigenvalue $\lambda$.

To succinctly define new matrices constructed from others, we write
\[
M_1\oplus M_2\oplus\cdots\oplus M_{k}=\bigoplus_{i=1}^k M_i
\]
to denote the block diagonal matrix whose diagonal entries are the matrices $M_1,\ldots, M_k$. For $\lambda\in\C$, we define the $k\times k$ or $2k\times 2k$  matrix $C_{\lambda,k}$ by
\[
C_{\lambda,k}:= 
\begin{cases}
J_{\lambda,k}&\mbox{ if } \lambda\in\R\\
\left(
\begin{array}{cc}
0&  J_{\lambda^2,k}\\
I_k& 0
\end{array}
\right)
&\mbox{ otherwise},
\end{cases}
\]
where  $0$ denotes a matrix of appropriate size with zero in all entries. 
We define corresponding matrices $H_{\lambda, k}$ by
\[
H_{\lambda,k}:= 
\begin{cases}
S_{k}&\mbox{ if } \lambda\in\R\\
S_{2k}&\mbox{ otherwise}.
\end{cases}
\]

 For a nonnegative integer $k$, we define
\begin{equation}
\label{filt1}
W_\lambda^{(k)}:=\mathrm{span}_{\C}\left\{v\in W\,:\,(A^2-\lambda^2 I)^kv=0\mbox{ or }\left(A^2-\overline{\lambda^2} I\right)^kv=0\right\}.
\end{equation}
Since $A^2$ is linear,
we can enumerate its eigenvalues, letting $\lambda_1^2,\ldots,\lambda_\mu^2$ be the real  eigenvalues of $A^2$  and $\lambda_{\mu+1}^2,\ldots,\lambda_\gamma^2$ be the distinct eigenvalues of $A^2$ with positive imaginary part. In the canonical forms below, we assume that each $\lambda_i$ is the principle square root of $\lambda_i^2$. 
Since the linear operator $A^2$ is $\ell$-self-adjoint, it is easy to show
 (see, for example,  \citep[Theorem 4.2.4]{gohberg2006indefinite}) that the  space $W$ can be decomposed into pairwise-$\ell$-orthogonal $A^2$-invariant subspaces
\begin{align}\label{Cn decomposition}
W=W_{\lambda_1}^{(n)}\oplus W_{\lambda_2}^{(n)}\oplus\cdots \oplus W_{\lambda_\gamma}^{(n)}.
\end{align}
\begin{remark}\label{Gohberg--Lancaster--Rodman}
In \cite{gohberg2006indefinite}, the authors refine this decomposition of $W$, obtaining a canonical form for $(\ell, A^2)$. For an $\ell$-self-adjoint linear operator $B$, their theorem, \cite[Theorem 5.1.1]{gohberg2006indefinite}, states that the domain of $B$ can be decomposed into $B$-invariant, pairwise $\ell$-orthogonal subspaces such that there exists a basis with respect to which the restrictions of $\ell$ and $B$ to the decomposition's component subspaces are represented by matrices of the form $\pm S_{k}$ and $J_{\eta,k}$ if $\eta\in \R$ or $\pm S_{2k}$ and $J_{\eta,k}\oplus J_{\overline{\eta},k}$ if $\eta\not\in \R$ (this gives a canonical form for  $(\ell, A^2)$ by letting $B=A^2$).
\end{remark}
Note that  $W_{\lambda_i}^{(n)}$ is also $A$-invariant. Indeed , if $v\in W_{\lambda_i}^{(n)}$ and $(A^2-\lambda^2 I)^kv=0$, then 
\[
\left(A^2-\overline{\lambda_i^2}I\right)^n (Av)=A(A^2-\lambda_i^2I)^n v=0,
\]
which shows that $Av\in W_{\lambda_i}^{(n)}$. Similarly, if $v\in W_{\lambda_i}^{(n)}$ and $(A^2-\overline {\lambda^2} I)^kv=0$, then 
\[
\left(A^2-\lambda_i^2I\right)^n (Av)=A(A^2-\overline{\lambda_i^2}I)^n v=0,
\]
which shows that $Av\in W_{\lambda_i}^{(n)}$. This completes the proof of $A$-invariancy of $W_{\lambda_i}^{(n)}$.

Accordingly, we can normalize $\ell$ and $A$ on the spaces $W_{\lambda_i}$ separately to obtain a general canonical form. 


\begin{theorem}\label{simultaneous canonical form theorem}
The domain of an $\ell$-self-adjoint antilinear operator $A$ can be decomposed into $A$-invariant, pairwise $\ell$-orthogonal subspaces such that there exists a basis with respect to which the restrictions of $\ell$ and $A$ to the decomposition's component subspaces are represented by matrices of the form $\pm H_{\lambda,k}$ and $C_{\lambda,k}$ where $\lambda\in\{\lambda_1,\lambda_2,\ldots,\lambda_\gamma\}$ and $k\in\N$. The corresponding block  diagonal matrices representing $\ell$ and $A$ are unique up to a permutation of the blocks on the diagonal.
\end{theorem}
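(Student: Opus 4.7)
By the pairwise $\ell$-orthogonal decomposition \eqref{Cn decomposition}, the problem reduces to the case where the whole space equals $W_\lambda^{(n)}$ for a single $\lambda$; on each summand the restriction of $\ell$ is nondegenerate and the subspace is $A$-invariant (both already recorded above). I would then treat the cases $\lambda \in \R$ and $\lambda \notin \R$ separately. In both cases the overall outline is the same: produce an $A$-invariant decomposition of $W_\lambda^{(n)}$ into ``Hong cycles'' on which $A$ takes the shape $C_{\lambda,k}$, then simultaneously bring $\ell$ to $\pm H_{\lambda,k}$ on each cycle by exploiting the self-adjointness identity \eqref{seldef}.

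\emph{Case $\lambda \notin \R$.} The structural input here (Hong's form for a single antilinear operator \cite{hong1988canonical}) is that $W_\lambda^{(n)}$ decomposes into $A$-invariant ``double'' subspaces, each of which further splits as $U \oplus U'$ with $A \colon U \to U'$ and $A \colon U' \to U$ bijections and $A^2|_U$ a single Jordan block $J_{\lambda^2,k}$. The $\ell$-self-adjointness of $A^2$ forces $\ell$ to vanish on $U \times U$ and $U' \times U'$ and to pair $U$ with $U'$ perfectly. The form $\beta(u_1,u_2) := \ell(A u_1, u_2)$ on $U$ is then nondegenerate, symmetric, and $\C$-bilinear (by \eqref{seldef} and \eqref{symmdef}), and $A^2|_U$ is $\beta$-self-adjoint. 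A $\beta$-orthogonal change of basis inside $U$ (the classical simultaneous normalization of a symmetric $\C$-bilinear form with its self-adjoint operator, which carries no sign invariant over $\C$) brings $(A^2|_U, \beta)$ to $(J_{\lambda^2,k}, S_k)$. Adjoining the $A$-images of the resulting Jordan basis of $U$ then yields a basis of $U \oplus U'$ in which $(A, \ell) \simeq (C_{\lambda,k}, S_{2k}) = (C_{\lambda,k}, H_{\lambda,k})$.

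\emph{Case $\lambda \in \R$.} Here $W_\lambda^{(n)} = \ker(A^2 - \lambda^2 I)^n$ is a single generalized $A^2$-eigenspace, and the construction becomes the genuine Hermitian analogue of Gohberg--Lancaster--Rodman. Writing $V_j := \ker(A^2 - \lambda^2 I)^j$ and following the template of \cite[\S 5.3]{gohberg2006indefinite}, I would inductively peel off the longest $A$-cyclic summand: select a generator $v \in V_n \setminus V_{n-1}$ whose $A$-cyclic span $\mathrm{span}_\C\{v, Av, \ldots, A^{n-1}v\}$ is $\ell$-nondegenerate, rescale $v$ so that a canonical top pairing of the induced Hermitian flag on this cyclic subspace equals $\pm 1$, and perform a further change of basis inside the cyclic subspace that brings $(A, \ell)$ to $(J_{\lambda,n}, \epsilon S_n)$ with $\epsilon \in \{\pm 1\}$ recorded from the normalization. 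The $\ell$-orthogonal complement of this cyclic subspace is $A$-invariant because $A$ is $\ell$-self-adjoint, and induction on dimension completes the construction.

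\emph{Uniqueness and main obstacle.} The multiset of pairs $(\lambda_i, k_j)$ appearing in the canonical form is determined by the Jordan data of the linear operator $A^2$. In Case $\lambda \in \R$ the signs $\epsilon_j$ are additional invariants, and I would identify each $\epsilon_j$ as the signature of an induced Hermitian form on a one-dimensional subquotient at the top of its cyclic flag. The principal technical difficulty is Case $\lambda \in \R$: establishing the existence of an $\ell$-nondegenerate cyclic generator at each inductive step (more delicate than in \cite{gohberg2006indefinite} because $A$ is antilinear rather than linear) and showing that the sign $\epsilon_j$ is an invariant of the orbit of $(\ell, A)$ rather than of the chosen basis.
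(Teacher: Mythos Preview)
Your high-level reduction via \eqref{Cn decomposition} matches the paper, and your bilinear-form trick $\beta(u_1,u_2)=\ell(Au_1,u_2)$ in the nonreal case is a clean alternative to the paper's direct use of the Gohberg--Lancaster--Rodman form for $(\ell,A^2)$. However, your ``Case $\lambda\notin\R$'' silently merges two genuinely different situations, $\lambda^2\notin\R$ and $\lambda^2<0$, and your key claim---that $\ell$-self-adjointness of $A^2$ forces $\ell$ to vanish on $U\times U$ and $U'\times U'$---holds only in the first. When $\lambda^2\notin\R$ the Hong summands $U$ and $U'$ lie in distinct generalized $A^2$-eigenspaces (for $\lambda^2$ and $\overline{\lambda^2}$), which are automatically $\ell$-isotropic. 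When $\lambda^2<0$ is real, both $U$ and $U'$ lie in the \emph{same} generalized $A^2$-eigenspace, and $\ell|_{U\times U}$ need not vanish for an arbitrary Hong splitting: in the paper's Proposition~\ref{negative e-val canonical form prop} the naturally constructed cyclic basis gives $H_{1,1}=\pm S_{s_1}\neq 0$, and the passage to the target form $S_{2s_1}$ is obtained only by an indirect existence argument. So as written your $\beta$-argument does not cover $\lambda^2<0$.

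In ``Case $\lambda\in\R$'' you correctly flag the existence of an $\ell$-nondegenerate cyclic generator as the main obstacle, but the plan is too vague to carry through for $\lambda>0$. The $A$-cyclic basis $\{v,Av,\ldots,A^{n-1}v\}$ does not represent $A$ by $J_{\lambda,n}$; for that one needs the chain $(A-\lambda I)^j v$, and since $A-\lambda I$ is only $\R$-linear this forces $v$ into the real subspace $\widetilde{W}_\lambda^{(k)}=\ker_\R(A-\lambda I)^k$. The paper invests substantial effort (Lemmas~\ref{Splitting A^2 filtration} and~\ref{half space lemma}, then Lemma~\ref{reducing the general positive eigenvalue case}) to show first that an $\R$-basis of $\widetilde{W}_\lambda^{(k)}$ is a $\C$-basis of $W_\lambda^{(k)}$, and second that a real generator $v$ with $\ell\bigl(v,(A-\lambda I)^{s_1-1}v\bigr)\neq 0$ can be extracted, via a Fourier argument in a phase parameter, from a GLR generator $v'$. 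None of this machinery is visible in your sketch. The paper also separates $\lambda=0$ (Proposition~\ref{zero e-val canonical form prop}, using Takagi factorization) from $\lambda^2>0$; the zero case is genuinely easier and closer to what you wrote.
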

\begin{proof}
Since the decomposition in \eqref{Cn decomposition} is pairwise $\ell$-orthogonal and $A$-invariant, the result is a corollary of Propositions \ref{real e-val canonical form prop}, \ref{zero e-val canonical form prop}, \ref{negative e-val canonical form prop}, and \ref{nonreal e-val canonical form prop}.
\end{proof}

\begin{remark}\label{Hong--Horn}
In \cite[Theorem 3.1]{hong1988canonical}, the authors show that an antilinear operator $A$ can be represented by a matrix in the form of the matrix given in Theorem \ref{simultaneous canonical form theorem} representing the antilinear operator, that is, the domain of  $A$ can be decomposed into $A$-invariant subspaces on which $A$ is represented by $C_{\lambda,k}$ where $\lambda\in\{\lambda_1,\lambda_2,\ldots,\lambda_\gamma\}$ and $k\in\N$ (note, this is achieved without the assumption that $A$ is $\ell$-self-adjoint for some Hermitian form $\ell$). 
\end{remark}
A canonical form for a nonsingular antilinear operator is fully determined by the Jordan matrix representing its square, and we have a similar relationship between Theorem \ref{simultaneous canonical form theorem} and the Gohberg--Lancaster--Rodman form, recorded in the following lemma.
\begin{lemma}\label{GLR invariant lemma}
If $A$ is nonsingular then the canonical form for $(\ell,A)$ given in Theorem \ref{simultaneous canonical form theorem} is determined by the Gohberg--Lancaster--Rodman form for $(\ell,A^2)$.
\end{lemma}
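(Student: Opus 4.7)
The plan is to work block-by-block, showing that each canonical block $(\pm H_{\lambda,k}, C_{\lambda,k})$ in Theorem~\ref{simultaneous canonical form theorem} contributes specific blocks to the GLR form of $(\ell, A^2)$, and that the resulting map on multisets of blocks is injective. Since on any block $A^2$ is represented by $C_{\lambda,k}\overline{C_{\lambda,k}}$, the analysis splits into three cases for $\lambda \neq 0$: $\lambda > 0$ real, $\lambda = it$ with $t > 0$ purely imaginary, and $\lambda$ non-real and not purely imaginary.

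In the first case $C_{\lambda,k}\overline{C_{\lambda,k}} = J_{\lambda,k}^2$ is a single Jordan block similar to $J_{\lambda^2,k}$. The GLR sign characteristic for a single Jordan block at a real eigenvalue $\eta$ of an $\ell$-self-adjoint operator $B$ coincides with the sign of the invariant $\ell\bigl((B-\eta I)^{k-1}v, v\bigr)$ for any cyclic vector $v$ (a short argument using $(B-\eta I)^k = 0$ on the block shows this sign is basis-independent). Evaluating on $v = e_k$ gives $(2\lambda)^{k-1}\epsilon$, and since $\lambda > 0$ the sign is exactly $\epsilon$, so the GLR contribution is $(\epsilon S_k, J_{\lambda^2,k})$. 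In the third case $C_{\lambda,k}\overline{C_{\lambda,k}} = J_{\lambda^2,k} \oplus J_{\overline{\lambda^2},k}$ has two Jordan blocks at distinct conjugate eigenvalues, and $(\pm S_{2k}, J_{\lambda^2,k} \oplus J_{\overline{\lambda^2},k})$ is already a single GLR complex-eigenvalue block. The delicate second case is $C_{\lambda,k}\overline{C_{\lambda,k}} = J_{-t^2,k} \oplus J_{-t^2,k}$: here I would construct an explicit $\ell$-orthogonal decomposition of the $2k$-dimensional block into two $A^2$-invariant $k$-dimensional subspaces on which $\ell$ restricts to $+S_k$ and $-S_k$. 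Using a basis $f_1,\ldots,f_k,g_1,\ldots,g_k$ in which $\ell$ with matrix $\epsilon S_{2k}$ pairs $f_i$ with $g_{k+1-i}$, the vectors $v_i := f_i + (\epsilon/2) g_i$ and $w_i := f_i - (\epsilon/2) g_i$ work for either choice of $\epsilon$. Hence the GLR contribution is always the pair $(+S_k, J_{-t^2,k}) \oplus (-S_k, J_{-t^2,k})$, independent of $\epsilon$.

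Inverting this correspondence is then straightforward: each GLR block at a positive-real or non-real eigenvalue of $A^2$ determines a unique canonical form block of $(\ell,A)$ via the principal-square-root convention, while GLR blocks at negative-real eigenvalues (by the second case above) necessarily appear in matching-size pairs with opposite signs, and each such pair assembles into one canonical block $(\pm S_{2k}, C_{i\sqrt{-\eta},k})$. The principal technical obstacle is the sign computation in the purely imaginary case: the signature of $\epsilon S_{2k}$ is $(k,k)$ for both choices of $\epsilon$ and, when $k$ is even, so is that of every sign pattern $\{+,+\}$, $\{+,-\}$, $\{-,-\}$ for the GLR decomposition of $J_{-t^2,k} \oplus J_{-t^2,k}$, so the signature invariant alone cannot fix the outcome and the explicit chain construction above is essential.
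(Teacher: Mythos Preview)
The paper does not actually supply a proof of this lemma; it is stated as an observation immediately after Theorem~\ref{simultaneous canonical form theorem} and left to the reader. Your block-by-block computation is therefore not competing with any argument in the paper, and it is essentially the right way to verify the claim.

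Your analysis of the three cases is correct. In particular, the explicit chain construction in the purely imaginary case is accurate: with $v_i=f_i+\tfrac{\epsilon}{2}g_i$ and $w_i=f_i-\tfrac{\epsilon}{2}g_i$ one checks directly that $\ell(v_i,v_j)=\delta_{i+j,k+1}$, $\ell(w_i,w_j)=-\delta_{i+j,k+1}$, $\ell(v_i,w_j)=0$, and both spans are $A^2$-invariant, so the GLR contribution is $(+S_k,J_{-t^2,k})\oplus(-S_k,J_{-t^2,k})$ regardless of $\epsilon$.

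There is, however, one point you should close explicitly. In both the purely imaginary and the genuinely non-real cases you show that the GLR form of $(\ell,A^2)$ does not see the sign $\epsilon$ on the block $(\epsilon H_{\lambda,k},C_{\lambda,k})$. For the lemma to hold you must then check that the two sign choices are already equivalent as pairs $(\ell,A)$, i.e.\ lie in the same $GL_n(\mathbb C)$-orbit under \eqref{action}; otherwise the GLR form would fail to determine the canonical form. This is true: the change of basis $M=\mathrm{diag}(iI_k,-iI_k)$ satisfies $(M^{-1})^*S_{2k}M^{-1}=-S_{2k}$ and $M\,C_{\lambda,k}\,\overline{M}^{\,-1}=C_{\lambda,k}$ for every $\lambda\notin\mathbb R$, so $(+S_{2k},C_{\lambda,k})$ and $(-S_{2k},C_{\lambda,k})$ represent the same pair. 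Once you insert this observation, your inversion step (``each matching-sign pair of GLR blocks at a negative real eigenvalue assembles into one canonical block'') is justified and the argument is complete. The same remark disposes of the sign in the non-real, non-purely-imaginary case, where the GLR theory already tells you the $\pm$ on $S_{2k}$ is not an invariant for complex eigenvalues.
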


\section{Relation to CR Geometry}\label{applications}

In this section we demonstrate how the considered pairs of algebraic objects appear naturally in the study of a certain class of CR manifolds of hypersurface type.
CR manifolds of hypersurface type are real hypersurfaces in a complex space $\C^{n+1}$ and originally were introduced in order to study the biholomorpic equivalence between  domains in $\C^{n+1}$ via their boundaries (see, for example, the monograph \cite{jacobowitz}). The complex structure of $\mathbb C^{n+1}$ induces additional nontrivial structures on a hypersurface $M$. Namely, for every $x\in M$,  let $D_x$ be the maximal complex subspace of the tangent space $T_xM$, $D_x=T_xM\cap i T_xM$, where $i T_xM$ is the real subspace in $\mathbb R^{2n+2}$ ($\cong C^{n+1}$) obtained from $T_xM$ via the multiplication by $i$. The collection of hyperplanes $D=\{D_x\}_{x\in M} $ defines a corank 1 subbundle of $TM$, that is, a corank $1$ distribution on $M$. 

By construction, the multiplication by $i$ restricted to $D(x)$ defines and endomorphism of $D(x)$ that will be denoted by $J_x$. By construction, $J_x^2=I_x$, where $I_x$ is the identity operator on $D_x$.  The operator $J_x$ extends linearly to $\C D_x=D\otimes \C \subset \C T_xM= T_xM\otimes \C$, and, since $J_x^2=-I$, $ \C D$ splits into a direct sum of the $i$-eigenspace and $-i$-eigenspace of $J_x$, denoted by $E_x$ and $\overline E_x$, respectively. The collections $E=\{E_x\}_{x\in M}$ and $\overline E=\{\overline E_x\}_{x\in M}$ define subbundles of the complexified tangent bundle $\C T_xM$ and both of these subbundles are involutive, that is, $[E, E]\subset E$ and $[\overline E, \overline E]\subset \overline E$, where, for example,  by $[E, E]_x$  we mean the linear span of Lie brackets, evaluated at $x$, of any two sections of the bundle $E$. This involutivity comes from the fact that if $(z_1, \ldots z_{n+1})$ are standard coordinates in $\C^{n+1}$ then at every point  
$$E=\mathbb CD\cap\mathrm{span}\left(\left\{\frac{\partial}{\partial z_i} \right\}_{i=1}^{n+1}\right), \quad \overline E=\mathbb CD\cap\mathrm{span}\left(\left\{\frac{\partial}{\partial \bar z_i}\right\}_{i=1}^{n+1}\right).$$	

Keeping all of this in mind, an \emph{abstract CR structure of hypersurface type} is a triple $(M, D, J)$, where $M$ is an odd dimensional real manifold, $D$ is a corank 1 distribution, and $J:D\to D$ is an operator that preserves each fiber of $D$ (i.e., $J D_{x}=D_{x}\quad\forall x\in M$) such that $J$ is linear on each fiber, $J^2=-I$, and the corresponding subbundles $E$ and $\bar E$ of the complexified bundle $\C D$ are involutive. 

%

Now, given a fiber bundle $\pi:P\to M$,  let $\Gamma(P)$ be the set of smooth sections of $P$.
For $x\in M$, the Hermitian form $\mathcal{L}_x:E_x\to(\C TM)_p/(\C D)_p\cong \C$ known as the  Levi form of $(M,D,J)$ is defined, up to  a real multiple,  as 
\[
\mathcal{L}_x(X_x,Y_x) :=\frac{1}{2i}\left[X,\overline{Y}\right]_x\quad\quad\forall X,Y\in\Gamma(E).
\] 
The kernel $K_x$ of this Hermitian form is called the \emph{Levi kernel} of the CR structure at the point $x$. The CR structure  $(M, D, J)$ is called Levi-nondegenerate if $K_x=0$ at every point. The local differential geometry of Levi-nondegenerate CR structures is well understood (see \cite{cartan, chern, tanakaCR}). In recent years, interest arose in the uniformly degenerate structures, that is,  when $K_x\neq 0$ for every $x$; see \cite{porter2017absolute} for the list of references.

Assume now that the Levi kernel $K_x$ is one-dimensional for every $x$. First, the degenerate Hermitian form $\mathcal{L}_x$ on $E_x$ factors through $E_x/K_x$, defining the nondegenerate Hermitian form $\ell_x$ on $E_x/K_x$, that is, $\ell_x$ is well defined by $\ell_x( \pi v,\pi w):=\mathcal{L}_x( v,  w)$ with $\pi:H_x\to H_x/K_x$ denoting the canonical projection projection. Second, for  $x\in M$ and $v\in K_x$, we define the antilinear operator $A_{x}: H_x/K_x\to H_x/K_x$ by choosing $V\in\Gamma(K_x)$ such that $V(x)=v$ and
\[
A_{x}(Y_x)=\left[V,\overline{Y}\right]_x \pmod{K_x\oplus \overline{H}_x}\quad\quad\forall Y\in \Gamma(H_x/K_x).
\]
The  antilinear operator $A_x$ is defined, up to a complex multiple and is $\ell_x$-self-adjoint (see \cite{porter2017absolute}).

It turns out that a pair $(\mathbb R \ell_x, \C A_x)$ is a basic invariant of the CR structure under consideration at a point $x$ and our Theorem \ref{simultaneous canonical form theorem} gives the classification of pairs $(\ell_x, A_x)$ and therefore of the these basic invariants. In 
\cite{porter2017absolute}, the structure of an absolute parallelism  (i.e., a canonical frame/coframe in a certain bundle over $M$) and maximally symmetric models were found in the particular case where $A_x^3$ is a scalar multiple of $A_x$ at every point. The reason for this somewhat weird condition is that only in this case was it possible to apply a certain version of the machinery of the prolongation of filtered structures for the construction of an absolute parallelism. In many cases where $A_x^3$ is not a scalar multiple of $A_x$ it is not even clear if there exist CR-structures on which its symmetry group acts transitively, and the existing differential geometric methods only offer hope to obtain upper bounds for the dimension of the symmetry group of the most symmetric models with pairs $(\mathbb R \ell_x,\C A_x)$ lying in the same prescribed orbit under the natural $GL$-action for every $x$. 
In any case, we expect that the classification given by our Theorem \ref{simultaneous canonical form theorem} will be useful for finding an upper bound for dimensions of  symmetry groups  and other questions related to CR structures with one-dimensional Levi kernel for which $A_x^3$ is not a scalar multiple of $A_x$.


\section{Normal Forms for Restrictions to Generalized Eigenspaces}\label{antilinear operators special cases}
In this section we  obtain a canonical form for the restrictions of $\ell$ and $A$ to the spaces $W_\lambda^{(n)}$, and these results can be taken together to obtain the canonical form in Theorem \ref{simultaneous canonical form theorem}. The approach we employ varies depending on the eigenvalue $\lambda^2$ of $A^2$, so this section is structured with subsections, each dedicated to a case where $\lambda^2$ belongs to a different family. We repeatedly use the following lemma, which is completely analogous to a standard property of linear self-adjoint operators. 
\begin{lemma}\label{ell-orthogonal complement}
If $V\subset \C^n$ is an $A$-invariant subspace on which $\ell$ is nondegenerate then the $\ell$-orthogonal complement $V^{\perp_\ell}$ of $V$ is also $A$-invariant.
\end{lemma}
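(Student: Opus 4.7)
The plan is to argue directly from the definition of the orthogonal complement and the self-adjointness condition \eqref{seldef}, in a manner completely parallel to the linear case. Concretely, I would fix an arbitrary element $w\in V^{\perp_\ell}$ and verify that $Aw$ again annihilates every element of $V$ under $\ell$. Since $\ell$ is Hermitian, the two-sided orthogonal complement agrees with the one-sided one, so the condition $w\in V^{\perp_\ell}$ amounts to $\ell(v,w)=0$ for all $v\in V$.

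The key computation is then the identity
\[
\ell(Aw,v) \;=\; \ell(Av,w) \qquad \forall v\in V,
\]
which is nothing but the $\ell$-self-adjointness \eqref{seldef} applied to the pair $(w,v)$. The $A$-invariance of $V$ gives $Av\in V$, and therefore the right-hand side vanishes because $w\in V^{\perp_\ell}$. Hence $\ell(Aw,v)=0$ for every $v\in V$, which is exactly the statement $Aw\in V^{\perp_\ell}$. As $w$ was arbitrary, $V^{\perp_\ell}$ is $A$-invariant.

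There is no real obstacle here; the argument is a three-line manipulation. I would only note, as a small aside in the written proof, that the nondegeneracy hypothesis on $\ell|_V$ is not actually invoked in the invariance argument itself. It is included in the statement because the typical downstream use of the lemma relies on the resulting direct sum decomposition $W = V \oplus V^{\perp_\ell}$, which does require that hypothesis. Thus the proof will consist of stating the defining condition for $V^{\perp_\ell}$, applying \eqref{seldef}, and invoking $A$-invariance of $V$ — nothing more.
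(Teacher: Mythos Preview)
Your argument is correct and is essentially identical to the paper's own proof: both use $\ell$-self-adjointness to rewrite $\ell(Aw,v)$ as $\ell(Av,w)$ and then invoke $A$-invariance of $V$ to conclude this vanishes. Your additional remark that the nondegeneracy hypothesis is not used in the invariance step itself (only in the downstream decomposition $W=V\oplus V^{\perp_\ell}$) is accurate and worth keeping.
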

\begin{proof}
Since $V$ is $A$-invariant,  for any $w\in V$ , we have that $Aw\in V$, which implies that, for $v\in V^{\perp_\ell}$, we have $\ell(Aw, v)=0$. Therefore,  since $A$ is $\ell$-self-adjoint, for $v\in V^{\perp_\ell} $ and $w\in V$, we have $\ell(Av, w)=\ell(Aw, v)=0$, which implies that $Av\in V^{\perp_\ell}$.
\end{proof}

\subsection{Treating Generalized Eigenspaces with Positive Eigenvalues}\label{positive eigenvalue subsection}
Throughout this subsection we assume $\lambda^2>0$, and this subsection's main result is  Proposition \ref{real e-val canonical form prop}.

For this special case with $\lambda^2>0$, we define three additional filtrations of $W_\lambda^{(n)}$. Namely,
\begin{equation}
\label{filt2}
W_\lambda^{(k)\pm}:=\left\{x\in W\,:\,(A\mp\lambda I)(A^2-\lambda^2 I)^{k-1}x=0\right\},
\end{equation}
and 
\begin{equation}
\label{filt3}
\widetilde{W}_\lambda^{(k)}:=\left\{x\in W: \left(A-\lambda I\right)^kx=0\right\}.
\end{equation}

The following two lemmas address the relationship between the filtrations $\left\{W_\lambda^{(k)}\right\}$, $\left\{W_\lambda^{(k)\pm}\right\}$, and $\left\{\widetilde{W}_\lambda^{(k)}\right\}$, defined by \eqref{filt1}, \eqref{filt2}, and \eqref{filt3}, respectively. Note that, for each $k$, $W_\lambda^{(k)\pm}$ and $\widetilde{W}_\lambda^{(k)}$ are vector spaces over $\mathbb R$ but not over $\mathbb C$.  In principle, these lemmas can be deduced from the Hong--Horn canonical forms for antilinear operators from \cite[Theorem 3.1]{hong1988canonical} (see also Remark \ref{Hong--Horn} above), but we prefer to give an independent geometric proof of these Lemmas, first, in order to make the presentation self-contained (as the source \cite{hongthesis}, where  \cite[Theorem 3.1]{hong1988canonical} is proved, is not easily available), second, because our proofs of these Lemmas are the main ingredient in the new geometric proof of Hong and Horn's result (outlined in section \ref{A canonical form for antilinear operators}), and, third, because this proof seems to be interesting by itself.

\begin{lemma} \label{Splitting A^2 filtration} For all positive integers $k$,  we have $W_\lambda^{(k)}/W_\lambda^{(k-1)}=W_\lambda^{(k)+}/W_\lambda^{(k-1)}\oplus W_\lambda^{(k)-}/W_\lambda^{(k-1)}$. Moreover, $W_\lambda^{(k)}/W_\lambda^{(k-1)}=\text{span}_\C\left(W_\lambda^{(k)+}/W_\lambda^{(k-1)}\right)$.
\end{lemma}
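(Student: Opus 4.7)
The plan is to produce an explicit $\mathbb R$-linear splitting $v = P_+ v + P_- v$ with $P_\pm v \in W_\lambda^{(k)\pm}$, verify that $W_\lambda^{(k)+} \cap W_\lambda^{(k)-} \subset W_\lambda^{(k-1)}$, and then observe that multiplication by $i$ swaps $W_\lambda^{(k)+}$ with $W_\lambda^{(k)-}$ in order to upgrade the resulting real direct sum decomposition to the claimed complex span.

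First I would record three preliminary algebraic facts whose verifications use only the antilinearity of $A$ together with $\lambda \in \R$ (which follows from $\lambda^2 > 0$ and the principal-root convention): (i) the factorization $(A \pm \lambda I)(A \mp \lambda I) = A^2 - \lambda^2 I$ — the would-be cross terms $\lambda Av$ and $\bar\lambda Av$ cancel because $\bar\lambda = \lambda$; (ii) the operators $A \pm \lambda I$ and $A^2 - \lambda^2 I$ pairwise commute; and (iii) $W_\lambda^{(k-1)} \subset W_\lambda^{(k)+} \cap W_\lambda^{(k)-}$, so the quotients in the statement are well defined. With these in hand, set $P_\pm := \tfrac{1}{2\lambda}(\lambda I \pm A)$; these are $\mathbb R$-linear (but not $\C$-linear, since $A$ is antilinear) and satisfy $P_+ + P_- = I$. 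Using (i) and (ii), a one-line computation shows that for any $v \in W_\lambda^{(k)}$,
\begin{equation*}
(A \mp \lambda I)(A^2 - \lambda^2 I)^{k-1} P_\pm v = \tfrac{1}{2\lambda}(A^2 - \lambda^2 I)^k v = 0,
\end{equation*}
so $P_\pm v \in W_\lambda^{(k)\pm}$ and $v = P_+ v + P_- v$ is the required $\mathbb R$-linear splitting. Directness at the quotient level then follows because any $v \in W_\lambda^{(k)+} \cap W_\lambda^{(k)-}$ satisfies $2\lambda(A^2 - \lambda^2 I)^{k-1}v = 0$ (subtract the two defining relations), placing $v$ in $W_\lambda^{(k-1)}$ since $\lambda \neq 0$.

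For the complex span statement, I would combine $A(iv) = -iAv$ with the $\C$-linearity of $A^2 - \lambda^2 I$ to obtain
\begin{equation*}
(A - \lambda I)(A^2 - \lambda^2 I)^{k-1}(iv) = -i(A + \lambda I)(A^2 - \lambda^2 I)^{k-1} v,
\end{equation*}
which shows that $iv \in W_\lambda^{(k)+}$ if and only if $v \in W_\lambda^{(k)-}$. Hence $i \cdot W_\lambda^{(k)-} = W_\lambda^{(k)+}$, so after passing to the quotient the $\mathbb R$-direct sum from the first part upgrades to $W_\lambda^{(k)}/W_\lambda^{(k-1)} = \text{span}_\C\left(W_\lambda^{(k)+}/W_\lambda^{(k-1)}\right)$.

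The one point that requires genuine care is that the candidate projectors $P_\pm$ are only $\mathbb R$-linear — reflecting that $A$ is antilinear — so the splitting is inherently a real-vector-space statement and the complex structure on the quotient must be recovered separately via the swap-by-$i$ argument. The bookkeeping with antilinearity simplifies dramatically here because $\lambda$ is real, which is exactly why the positive-eigenvalue case is separated out from the nonreal-eigenvalue case treated later in the section.
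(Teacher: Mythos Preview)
Your proof is correct and follows essentially the same route as the paper: both arguments split $v$ via $v \mapsto \tfrac{1}{2\lambda}(\lambda v \pm Av)$, both establish directness by the observation that membership in $W_\lambda^{(k)+}\cap W_\lambda^{(k)-}$ forces $(A^2-\lambda^2 I)^{k-1}v=0$, and both recover the complex span from the relation $i\cdot W_\lambda^{(k)-}=W_\lambda^{(k)+}$. Your packaging with explicit $\mathbb R$-linear projectors $P_\pm$ and the one-line subtraction for the intersection is slightly tidier than the paper's kernel argument on the quotient, but the underlying ideas are identical.
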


\begin{proof} If $x\in W_\lambda^{(k)}$ then ${\lambda}x\pm Ax\in W_\lambda^{(k)\pm}$ because 
\[
(A\mp {\lambda}I)(A^2-\lambda^2 I)^{k-1}( {\lambda}x\pm Ax)=(A^2-\lambda^2 I)^{k}x=0,
\]
which shows
\[
\left\{ {\lambda}x\pm Ax\,:\, x\in W_\lambda^{(k)}\right\}/W_\lambda^{(k-1)} \subset W_\lambda^{(k)\pm}/W_\lambda^{(k-1)}.
\]
Accordingly,
\begin{align}\label{splitting lemma inclusion eqn}
W_\lambda^{(k)}/W_\lambda^{(k-1)} & \stackrel{*}{=} \left\{ {\lambda}x- Ax\,:\, x\in W_\lambda^{(k)}\right\}/W_\lambda^{(k-1)}\oplus\left\{ {\lambda}x+ Ax\,:\, x\in W_\lambda^{(k)}\right\}/W_\lambda^{(k-1)}\nonumber\\
&\subset W_\lambda^{(k)-}/W_\lambda^{(k-1)}\oplus W_\lambda^{(k)+}/W_\lambda^{(k-1)}\nonumber\\
&\stackrel{**}{\subset} W_\lambda^{(k)}/W_\lambda^{(k-1)},
\end{align}
where  ** holds because $A^2-\lambda^2 I=(A+ {\lambda}I)(A- {\lambda}I)$ and * holds for the following reason. Both $\left\{ {\lambda}x- Ax\,:\, x\in W_\lambda^{(k)}\right\}/W_\lambda^{(k-1)}$ and $\left\{ {\lambda}x+ Ax\,:\, x\in W_\lambda^{(k)}\right\}/W_\lambda^{(k-1)}$ are disjoint subsets of $W_\lambda^{(k)}/W_\lambda^{(k-1)} $ because they belong to the kernel of $A+\lambda I:W_\lambda^{(k)}/W_\lambda^{(k-1)}\to W_\lambda^{(k)}/W_\lambda^{(k-1)}$ and $A-\lambda I:W_\lambda^{(k)}/W_\lambda^{(k-1)}\to W_\lambda^{(k)}/W_\lambda^{(k-1)}$ respectively, and these kernels are disjoint because if $v$ is in both kernels then $\lambda v\equiv -\lambda v\pmod{W_\lambda^{(k-1)}}$. This shows that the direct sum on the right side of * is naturally a subset of $W_\lambda^{(k)}/W_\lambda^{(k-1)} $. On the other hand, for any $v\in W_\lambda^{(k)}/W_\lambda^{(k-1)} $, we have
\[
v=\left(\lambda\left( \frac{v}{\lambda}\right)-A\left( \frac{v}{\lambda}\right)\right)+\left(\lambda\left( \frac{v}{\lambda}\right)+A\left( \frac{v}{\lambda}\right)\right)
\]
which shows that $W_\lambda^{(k)}/W_\lambda^{(k-1)} $ is contained in the direct sum on the right side of *.

By \eqref{splitting lemma inclusion eqn},  $W_\lambda^{(k)}/W_\lambda^{(k-1)}=\text{span}_\C\left(W_\lambda^{(k)+}/W_\lambda^{(k-1)}\right)$ because $W_\lambda^{(k)+}/W_\lambda^{(k-1)}=iW_\lambda^{(k)-}/W_\lambda^{(k-1)}$.
\end{proof}
\begin{remark}
Notice, we have already used the special condition $\lambda^2>0$ of \ref{positive eigenvalue subsection}, because Lemma \ref{Splitting A^2 filtration} relies on the fact that $A^2-\lambda^2 I=(A+ {\lambda}I)(A- {\lambda}I)$.
\end{remark}

%
%
%

\begin{lemma}\label{half space lemma}
Any basis of the real vector space $\widetilde{W}_\lambda^{(k)}$ is also a basis of the complex vector space ${W}_\lambda^{(k)}$.
\end{lemma}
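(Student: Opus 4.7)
The plan is to reduce Lemma \ref{half space lemma} to the real-vector-space direct-sum decomposition
\[
W_\lambda^{(k)} \;=\; \widetilde{W}_\lambda^{(k)} \;\oplus\; i\, \widetilde{W}_\lambda^{(k)},
\]
which exhibits $\widetilde{W}_\lambda^{(k)}$ as a real form of the complex space $W_\lambda^{(k)}$. Once this is established the lemma is immediate: any $\R$-basis of the real form is automatically a $\C$-basis of the complexification (linear independence over $\C$ follows by splitting a relation into real and imaginary parts which lie in complementary real summands; spanning follows by decomposing every element of $W_\lambda^{(k)}$ as $u + iw$ with $u,w\in\widetilde{W}_\lambda^{(k)}$ and expanding $u,w$ in the given $\R$-basis).

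To obtain the decomposition, I first exploit the fact that $\lambda \in \R$ to write $A^2 - \lambda^2 I$ as a product of two commuting $\R$-linear operators: antilinearity of $A$ together with $\overline{\lambda}=\lambda$ gives $A(\lambda v) = \lambda A v$, whence $(A - \lambda I)(A + \lambda I) = A^2 - \lambda^2 I = (A + \lambda I)(A - \lambda I)$ and therefore $(A^2 - \lambda^2 I)^k = (A - \lambda I)^k (A + \lambda I)^k$. Since $(x - \lambda)^k$ and $(x + \lambda)^k$ are coprime in $\R[x]$ (we are assuming $\lambda\ne 0$), B\'ezout's identity applied to the $\R$-linear operator $A$ yields the real direct-sum decomposition
\[
W_\lambda^{(k)} \;=\; \ker(A - \lambda I)^k \;\oplus\; \ker(A + \lambda I)^k.
\]

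Next I would identify $\ker(A + \lambda I)^k$ with $i\, \widetilde{W}_\lambda^{(k)}$. A short induction on $k$ using antilinearity of $A$ establishes the identity
\[
(A + \lambda I)^k(i v) \;=\; (-1)^k\, i\, (A - \lambda I)^k v \qquad \forall v \in W,
\]
the base case being $(A+\lambda I)(iv) = -i A v + i\lambda v = -i(A-\lambda I)v$. This shows that $v \in \widetilde{W}_\lambda^{(k)}$ if and only if $i v \in \ker(A + \lambda I)^k$; hence $\ker(A + \lambda I)^k = i\,\widetilde{W}_\lambda^{(k)}$, and the desired decomposition follows.

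The main obstacle is the careful bookkeeping of signs in the identity $(A + \lambda I)^k(iv) = (-1)^k\, i\,(A - \lambda I)^k v$: because $A$ is antilinear, commuting $A$ past the scalar $i$ flips a sign, and one must verify that these sign flips accumulate correctly under iteration. Everything else is formal: the factorization uses only that $\lambda$ is real, and the rest is the standard coprime-polynomial kernel decomposition for an $\R$-linear operator, with the hypothesis $\lambda\ne 0$ being exactly what the subsection's standing assumption $\lambda^2 > 0$ supplies.
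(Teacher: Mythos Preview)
Your argument is correct and is considerably more direct than the paper's. The paper proves the lemma by induction on $k$: it first establishes the auxiliary quotient decomposition $W_\lambda^{(k)}/W_\lambda^{(k-1)}=W_\lambda^{(k)+}/W_\lambda^{(k-1)}\oplus W_\lambda^{(k)-}/W_\lambda^{(k-1)}$ (Lemma \ref{Splitting A^2 filtration}), then through a fairly lengthy computation shows that vectors in $\widetilde{W}_\lambda^{(k)}$ lying outside $\widetilde{W}_\lambda^{(k-1)}$ remain $\C$-independent modulo $W_\lambda^{(k-1)}$, identifies $\widetilde{W}_\lambda^{(k)}/\widetilde{W}_\lambda^{(k-1)}\cong W_\lambda^{(k)+}/W_\lambda^{(k-1)}$, and finishes by a dimension count. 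Only afterwards does the paper record the decomposition $W_\lambda^{(k)}=\widetilde{W}_\lambda^{(k)}\oplus i\widetilde{W}_\lambda^{(k)}$ as Corollary \ref{decomposition corollary}. You reverse the logic: you obtain that real direct sum in one stroke from B\'ezout applied to the $\R$-linear operator $A$ together with the sign identity $(A+\lambda I)^k(iv)=(-1)^k i(A-\lambda I)^k v$, and the lemma falls out immediately. Your route is shorter, avoids the intermediate filtrations $W_\lambda^{(k)\pm}$ entirely, and in fact renders Lemma \ref{Splitting A^2 filtration} unnecessary for the purposes of the paper; the price is that it does not display the graded structure of the quotients, but that structure is not used elsewhere.
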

\begin{proof}
When $k=1$, the statement follows from Lemma \ref{Splitting A^2 filtration} because  $\widetilde{W}_\lambda^{(1)}={W}_\lambda^{(1)+}$ and ${W}_\lambda^{(0)}=0$. Proceeding by induction, let us assume any basis of the real vector space $\widetilde{W}_\lambda^{(k-1)}$  is also a basis of the complex vector space ${W}_\lambda^{(k-1)}$.  Suppose $\dim \widetilde{W}_\lambda^{(k-1)}=l$ and $\dim \widetilde{W}_\lambda^{(k)}/\widetilde{W}_\lambda^{(k-1)}=m$, and let $\{e_1,\ldots, e_{l+m}\}$ be a basis of $\widetilde{W}_\lambda^{(k)}$. Without loss of generality, we can assume $\{e_1,\ldots, e_{l}\}\subset \widetilde{W}_\lambda^{(k-1)}$ because this assumption does not change the real or complex span of $\{e_1,\ldots, e_{l+m}\}$.

First, we show that the vectors $e_{l+1},\ldots, e_{l+m}$ are linearly independent over $\C$ modulo ${W}_\lambda^{(k-1)}$. For this, consider a vector $v\in \text{span}_\C\{e_{l+1},\ldots, e_{l+m}\}$ with coefficients $\alpha_{l+1},\ldots,\alpha_{l+m}\in\R$ and $\beta_{l+1},\ldots,\beta_{l+m}\in\R$ such that 
\[
v:=\sum_{j=l+1}^{l+m}(\alpha_j+i\beta_j)e_i\in W_\lambda^{(k-1)}.
\]
Set
\[
v_+:=\sum_{j=l+1}^{l+m}\alpha_je_i\quad\mbox{ and }\quad v_-:=\sum_{j=l+1}^{l+m}i\beta_je_i.
\]
Since $\widetilde{W}_\lambda^{k}$ is a real vector space, $v_+\in \widetilde{W}_\lambda^{k}$, and hence, by \eqref{filt2},
\[
A(A-\lambda I)^{k-1}v_+=\lambda (A-\lambda I)^{k-1}v_+.
\]
Therefore
\begin{align}\label{half space lemma eqn1}
(A+\lambda I)(A-\lambda I)^{k-1}v_+=2\lambda(A-\lambda I)^{k-1}v_+
\end{align}
Notice $(A+\lambda I)^kv_-=0$ because, for all $l<j\leq l+m$, $(A+\lambda I)^k(i\beta_i e_j)=-i\beta_j(A-\lambda I)^ke_j=0$. Since $v\in W_\lambda^{(k-1)}$ and $(A+\lambda I)^kv_-=0$, 
\begin{align*}
0=(A+\lambda I)(A^2-\lambda^2 I)^{k-1}v
&=(A+\lambda I)(A^2-\lambda^2 I)^{k-1}v_+ + (A-\lambda I)^{k-1}(A+\lambda I)^{k}v_-\\
&=(A+\lambda I)(A^2-\lambda^2 I)^{k-1}v_+,
\end{align*}
and hence
\begin{align}\label{half space lemma eqn2}
(A^2-\lambda^2 I)^{k-1}v_+\in \ker (A+\lambda I).
\end{align}
Furthermore, $(A-\lambda I)^kv_+=0$ because, for all $l<j\leq l+m$, $(A-\lambda I)^k\alpha_je_j=0$, so
\begin{align}\label{half space lemma eqn3}
(A^2-\lambda^2 I)^{k-1}v_+\in \ker (A-\lambda I).
\end{align}
Yet, $\ker (A-\lambda I)\cap \ker (A+\lambda I)=0$, \eqref{half space lemma eqn2} and \eqref{half space lemma eqn3} imply
\begin{align}\label{half space lemma eqn4}
v_+\in \ker (A^2-\lambda^2 I)^{k-1},
\end{align}
and \eqref{half space lemma eqn1} implies
\begin{align}\label{half space lemma eqn5}
(A^2-\lambda^2 I)^{k-1}v_+=(A+\lambda I)^{k-1}(A-\lambda I)^{k-1}v_+=(2\lambda)^{k-1}(A-\lambda I)^{k-1}v_+.
\end{align}
Together, \eqref{half space lemma eqn4} and \eqref{half space lemma eqn5} imply that 
\[
v_+\in \ker (A-\lambda I)^{k-1}=\widetilde{W}_\lambda^{(k-1)}=\text{span}_\R\{e_1,\ldots,e_l\}
\]
and hence $v_+=0$ because $\text{span}_\R\{e_1,\ldots,e_l\}\cap\text{span}_\R\{e_{l+1},\ldots,e_{l+m}\}=0$. Note that $v_+=0$ implies $\alpha_{l+1}=\cdots=\alpha_{l+m}=0$ because $e_{l+1},\ldots,e_{l+m}$ are linearly independent over $\R$.
Repeating the same argument with $v$ replaced by $iv$ yields $v_-=0$ and $\beta_{l+1}=\cdots=\beta_{l+m}=0$ as well. Hence $v=0$, which shows that
\begin{align}\label{half space lemma eqn6}
\text{span}_\C\{e_{l+1},\ldots, e_{l+m}\}\cap W_\lambda^{(k-1)}=\text{span}_\C\{e_{l+1},\ldots, e_{l+m}\}\cap \text{span}_\C\{e_{1},\ldots, e_{l}\}=0.
\end{align}

Let us now establish the vector space isomorphism $\widetilde{W}_\lambda^{(k)}/\widetilde{W}_\lambda^{(k-1)}\cong {W}_\lambda^{(k)+}/{W}_\lambda^{(k-1)}$. The cosets 
\[
e_{l+1}+W_\lambda^{(k-1)},\ldots,e_{l+m}+W_\lambda^{(k-1)}
\]
are linearly independent vectors (over $\R$) in the space $ W_\lambda^{(k)+}/W_\lambda^{(k-1)}$. If we take an arbitrary vector $w+W_\lambda^{(k-1)}\in  W_\lambda^{(k)+}/W_\lambda^{(k-1)}$ then $(A-\lambda I)w\in W_\lambda^{(k-1)}$, so
\[
(A+\lambda I)w\equiv 2\lambda w\pmod{W_\lambda^{(k-1)}}.
\]
Hence,
\begin{equation}
\label{ww}
(2\lambda)^{1-k}(A+\lambda I)^{k-1}w\equiv w\pmod{W_\lambda^{(k-1)}}.
\end{equation}
Now observe that $(A+\lambda I)^{k-1}w\in \widetilde{W}_\lambda^{(k)}$. Indeed, from the definitions \eqref{filt1} and \eqref{filt2} and the fact that $w\in W_\lambda^{(k)+}$, it follows that 
$$(A-\lambda I)^k(A+\lambda I)^{k-1}w=(A-\lambda I)(A^2- \lambda^2I)^{k-1}w=0.$$
Hence, by \eqref{ww}, $w\in \widetilde{W}_\lambda^{(k)}$. Therefore, there exist real coefficients $a_{l+1},\ldots, a_{l+m}$ such that 
\[
w\equiv (2\lambda)^{1-k}(A+\lambda I)^{k-1}w\equiv \sum_{i=l+1}^{l+m}a_{i}e_i \pmod{ {W}_\lambda^{(k)}}.
\]
This shows that the cosets 
\[
e_{l+1}+W_\lambda^{(k-1)},\ldots,e_{l+m}+W_\lambda^{(k-1)}
\]
form a basis of $ W_\lambda^{(k)+}/W_\lambda^{(k-1)}$.
On the other hand, the cosets 
\[
e_{l+1}+\widetilde{W}_\lambda^{(k-1)},\ldots,e_{l+m}+\widetilde{W}_\lambda^{(k-1)}
\]
form a basis of $\widetilde{W}_\lambda^{(k)}/\widetilde{W}_\lambda^{(k-1)}$,
so the real vector spaces $\widetilde{W}_\lambda^{(k)}/\widetilde{W}_\lambda^{(k-1)}$ and $ W_\lambda^{(k)+}/W_\lambda^{(k-1)}$ are isomorphic.

Applying Lemma \ref{Splitting A^2 filtration}, we get
\[
\dim_\C W_\lambda^{(k)}/W_\lambda^{(k-1)}=\dim_\R W_\lambda^{(k)+}/W_\lambda^{(k-1)}=\dim_\R  W_\lambda^{(k)+}/W_\lambda^{(k-1)}=m,
\]
which implies
\[
\dim_\C W_\lambda^{(k)}=m+\dim W_\lambda^{(k-1)}=l+m.
\]
We have shown that $v\in W_\lambda^{(k-1)}$ implies $\alpha_{l+1}=\cdots=\alpha_m=\beta_{l+1}=\cdots=\beta_{l+m}=0$. 
In particular, we have shown that $v=0$ implies $\alpha_{l+1}=\cdots=\alpha_{l+m}=\beta_{l+1}=\cdots=\beta_{l+m}=0$. Therefore, $e_{l+1},\ldots, e_{l+m}$ are linearly independent over $\C$, that is,
\[
\dim_\C\text{span}_\C\{e_{l+1},\ldots, e_{l+m}\}=m,
\]
so, by the induction hypothesis and \eqref{half space lemma eqn6},
\[
\dim_\C\text{span}_\C\{e_{1},\ldots, e_{l+m}\}=l+m.
\]
Therefore, $W_\lambda^{(k)}=\text{span}_\C\{e_{1},\ldots, e_{l+m}\}$ because $e_{1},\ldots, e_{l+m}$ are $l+m$ linearly independent vectors (over $\C$) in $W_\lambda^{(k)}$.
\end{proof}

\begin{corollary}\label{decomposition corollary}
If $v\in {W}_\lambda^{(k)}$ then there exist unique vectors $v_{+},v_{-}\in\widetilde{W}_\lambda^{(k)}$ such that $v=v_++iv_-$.
\end{corollary}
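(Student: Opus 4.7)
The plan is to deduce this corollary directly from Lemma \ref{half space lemma}, which identifies a real basis of $\widetilde{W}_\lambda^{(k)}$ with a complex basis of $W_\lambda^{(k)}$.

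First I would fix an $\R$-basis $\{e_1,\ldots, e_d\}$ of $\widetilde{W}_\lambda^{(k)}$. By Lemma \ref{half space lemma}, the same set is a $\C$-basis of $W_\lambda^{(k)}$. Given $v\in W_\lambda^{(k)}$, I can therefore expand it uniquely as $v=\sum_{j=1}^d (\alpha_j+i\beta_j)e_j$ with $\alpha_j,\beta_j\in\R$, and set
\[
v_+:=\sum_{j=1}^d \alpha_j e_j,\qquad v_-:=\sum_{j=1}^d \beta_j e_j.
\]
Since $\widetilde{W}_\lambda^{(k)}$ is an $\R$-subspace containing each $e_j$, both $v_+$ and $v_-$ lie in $\widetilde{W}_\lambda^{(k)}$, and by construction $v=v_++iv_-$.

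For uniqueness, if $v=v_+'+iv_-'$ is any other decomposition with $v_\pm'\in \widetilde{W}_\lambda^{(k)}$, I would expand $v_+'$ and $v_-'$ in the real basis $\{e_j\}$, collect coefficients, and then use the fact that the $e_j$ form a $\C$-basis of $W_\lambda^{(k)}$ to conclude that the real and imaginary parts of the coefficients of $v$ are determined uniquely, forcing $v_+=v_+'$ and $v_-=v_-'$.

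I do not anticipate any real obstacle here, since all the substantive work has already been done in Lemma \ref{half space lemma}: the only subtlety is that $\widetilde{W}_\lambda^{(k)}$ is only a real subspace of $W_\lambda^{(k)}$, so one must be careful to keep the coefficients real when defining $v_\pm$. Once that point is observed, the argument is a one-line consequence of basis uniqueness.
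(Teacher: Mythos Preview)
Your proposal is correct and matches the paper's intent: the corollary is stated in the paper without proof, as an immediate consequence of Lemma \ref{half space lemma}, and your argument via expanding $v$ in a real basis of $\widetilde{W}_\lambda^{(k)}$ (which by that lemma is simultaneously a complex basis of $W_\lambda^{(k)}$) is precisely the natural way to spell out that consequence.
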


%
%
%
%

Define $s_1$ to be the minimal natural number such that $W_\lambda ^{(s_1)}=W_\lambda^{(n)}$. We would like to find a vector $v\in \widetilde{W}_\lambda^{(s_1)}$ such that the space
\begin{align}\label{A inv nondeg space}
V=\text{span}_{\C} \left\{v, (A-\lambda I)v,\ldots,(A-\lambda I)^{s_1-1}v\right\}
\end{align}
is an $s_1$-dimensional $A$-invariant space on which $\ell$ is nondegenerate because we can then normalize $A$ and $\ell$ on the space $V$ and on the $\ell$-orthogonal complement of $V$ separately.  Proceeding throughout this subsection, for $v\in W_\lambda^{(n)}$, we adopt the notation of letting $v_+,v_-\in\widetilde{W}_\lambda^{(n)}$ be the unique vectors such that $v=v_++iv_-$, as given in Corollary \ref{decomposition corollary}.
\begin{lemma}\label{zero diagonals in H lemma}
If $H$ is a Hermitian $k\times k$ matrix, $\lambda>0$, and  $H J_{\lambda,k}$ is symmetric, then $H$ is a Hankel matrix satisfying 
\begin{align}\label{the zero diagonals condition}
H_{i,j}=0\quad\quad\forall i+j\leq k.
\end{align}
\end{lemma}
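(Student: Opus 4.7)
The plan is to translate the hypotheses into a concrete entry-wise identity, separate the real and imaginary parts of $H$, and analyze each separately.

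First, I would expand the hypothesis. Writing $J_{\lambda,k}=\lambda I_k+T_k$ and using $H^T=\overline H$ (from $H=H^*$), the symmetry $(HJ_{\lambda,k})^T=HJ_{\lambda,k}$ becomes $J_{\lambda,k}^T\overline H=HJ_{\lambda,k}$. Since $\lambda\in\R$ and $T_k$ is real, this rearranges to
\[
\lambda(H-\overline H)=T_k^T\overline H-HT_k.
\]
Decomposing $H=X+iY$ with $X=X^T$ and $Y=-Y^T$ real, the real part of the above reads $XT_k=T_k^TX$ and the imaginary part reads $T_k^TY+YT_k+2\lambda Y=0$. Adopting the convention that entries with row or column index $0$ vanish, these translate entry-wise to $X_{i,j-1}=X_{i-1,j}$ and $Y_{i-1,j}+Y_{i,j-1}+2\lambda Y_{ij}=0$ for all $i,j\in\{1,\ldots,k\}$.

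Next, I would argue $Y=0$ by induction on $s=i+j$. In the base case $s=2$, the recurrence at $(i,j)=(1,1)$ combined with the boundary convention gives $2\lambda Y_{11}=0$, so $Y_{11}=0$ since $\lambda>0$. For the inductive step, if $Y_{i',j'}=0$ whenever $i'+j'<s$, then for any $(i,j)$ with $i+j=s$ both $Y_{i-1,j}$ and $Y_{i,j-1}$ vanish (by the inductive hypothesis when the index is positive, or by the convention when it is $0$), hence $2\lambda Y_{ij}=0$ and $Y_{ij}=0$. Thus $H=X$ is real symmetric, and the relation $X_{i,j-1}=X_{i-1,j}$ immediately says $X$ is Hankel, i.e., $H_{ij}$ depends only on $i+j$.

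For the vanishing of entries with $i+j\leq k$, I would specialize $X_{i,j-1}=X_{i-1,j}$ to $j=1$: since $X_{i,0}=0$ by convention, $X_{i-1,1}=0$ for every $i\in\{1,\ldots,k\}$, so $X_{1,m}=0$ for $m\in\{1,\ldots,k-1\}$. By the Hankel property, $X_{i,j}=X_{1,i+j-1}$ whenever $i+j-1\leq k$, and when $i+j\leq k$ this forces $X_{i,j}=X_{1,i+j-1}=0$, which is precisely \eqref{the zero diagonals condition}.

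The main obstacle is really only the (routine) induction showing $Y\equiv 0$; this is the step that isolates the role of $\lambda\neq 0$. Once $H$ is known to be real, the Hankel and zero-triangle conclusions follow by reading off the entries of $HT_k=T_k^TH$ with the natural boundary convention.
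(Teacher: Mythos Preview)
Your proof is correct and takes a genuinely different route from the paper's own argument. The paper proceeds by induction on $k$: it observes that the upper-left $(k-1)\times(k-1)$ block $H'$ of $H$ again satisfies the hypothesis (with $J_{\lambda,k-1}$), invokes the inductive hypothesis to get $H'$ real Hankel with the zero triangle, and then finishes by equating specific entries $(HJ_{\lambda,k})_{m,k}=(HJ_{\lambda,k})_{k,m}$ and $(HJ_{\lambda,k})_{m+1,k}=(HJ_{\lambda,k})_{k,m+1}$ one at a time, using at each step that the already-known entries are real to force the new entries to be real and to fit the Hankel pattern. Your approach instead separates $H=X+iY$ at the outset, reduces the hypothesis to the two clean relations $XT_k=T_k^TX$ and $T_k^TY+YT_k+2\lambda Y=0$, kills $Y$ by a short induction on $i+j$ (this is precisely where $\lambda\neq 0$ enters), and then reads off both the Hankel structure and the vanishing triangle directly from $X_{i,j-1}=X_{i-1,j}$ together with the boundary convention. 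Your argument is more structural and makes the role of $\lambda>0$ completely transparent; the paper's entry-by-entry bookkeeping is more hands-on but achieves the same result without needing to name the real/imaginary decomposition explicitly.
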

\begin{proof}
Let $H^{\prime}$ be the upper left $(k-1)\times (k-1)$ block of $H$. Symmetry of $H J_{\lambda,k}$ implies that $H^{\prime}J_{\lambda,k-1}$ is symmetric. Since the Lemma is vacuously true for $k=1$, we can proceed by induction, and assume $H^\prime$ is a Hankel matrix satisfying
\[
H_{i,j}^\prime=0\quad\quad\forall i+j\leq k-1.
\]
Computing the $(1,k)$ and $(k,1)$ entries of $H J_{\lambda,k}$ yields
\[
\Bigl(H J_{\lambda,k}\Bigr)_{1,k}=\lambda H_{1,k}+ H_{1,k-1} \quad\mbox{ and }\quad \Bigl(H J_{\lambda,k}\Bigr)_{k,1}=\lambda \overline{H_{1,k}}.
\]
Symmetry of $H J_{\lambda,k}$ allows us to equate the terms, so
\[
H_{1,k-1}=\lambda\left( \overline{H_{1,k}}-H_{1,k}\right) \in\{iz\,|\,z\in\R\}.
\]
Yet, since $H^\prime$ is both Hankel and Hermitian, its entries are all real numbers. In particular, $H_{1,k-1}\in \R$, so 
\[
H_{1,k-1}=0\quad\mbox{ and }\quad H_{1,k}=H_{k,1}\in\R.
\]
Equating $\Bigl(H J_{\lambda,k}\Bigr)_{2,k}$ with $\Bigl(H J_{\lambda,k}\Bigr)_{k,2}$ yields
\[
H_{1,k}-H_{2,k-1}=\lambda\left(H_{2,k}-\overline{H_{2,k}}\right)\in\{iz\,|\,z\in\R\}
\]
which implies $H_{k,1}=H_{2,k-1}$ because, by the induction hypothesis, $H_{2,k-1}\in\R$. Accordingly,  
\[
H_{i,j}=H_{1,k}\quad\quad \forall i+j=k+1
\]
because $H^\prime$ is Hankel.

We conclude this proof with induction. Supposing, for some $1< m\leq k$, we have $H_{k,j}=H_{j,k}$ and $H_{k,j}=H_{j+1,k-1}$ for all $j<m$, let us establish that $H_{k,m}=H_{m,k}$ and $H_{k,m}=H_{m+1,k-1}$, where we interpret $H_{k,m}=H_{m+1,k-1}$ as vacuously true for $m= k$ (i.e., since $H_{i,j}$ is only defined for $\max\{i,j\}\leq k$, we can extend the definition of $H_{i,j}$ for $\max\{i,j\}> k$ in a way that satisfies the equations $H_{k,k+j}=H_{k+j,k}$ and $H_{k,k+j}=H_{k+j+1,k-1}$ for all $j$ by construction, and, of course, this extension's definition has no relevance to the normalization of $H$). Symmetry of $H J_{\lambda,k}$ implies
\[
H_{m,k-1}+\lambda H_{m,k}=\Bigl(H J_{\lambda,k}\Bigr)_{m,k}=\Bigl(H J_{\lambda,k}\Bigr)_{k,m}=H_{k,m-1}+ \lambda H_{k,m},
\]
and hence
\[
H_{m,k}-H_{k,m}=\lambda^{-1}(H_{k,m-1}-H_{m,k-1})=0
\]
because, by the induction hypothesis, $H_{k,m-1}=H_{m,k-1}$. If $m=k$ then there is nothing more to check, that is, $H_{k,m}=H_{m+1,k-1}$ is vacuously true. Similarly, if $m=k-1$ then we have already shown $H_{k,m}=H_{m+1,k-1}$. For $m<k-1$, we have
\[
H_{m+1,k-1}+\lambda H_{m+1,k}=\Bigl(H J_{\lambda,k}\Bigr)_{m+1,k}=\Bigl(H J_{\lambda,k}\Bigr)_{k,m+1}=H_{k,m}+ \lambda H_{k,m+1},
\]
and hence
\[
H_{k,m}-H_{m+1,k-1}=\lambda( H_{m+1,k}- H_{k,m+1})=\lambda( H_{m+1,k}- \overline{H_{m+1,k}})\in\{z\,|\, iz\in\R\},
\]
which implies $H_{k,m}=H_{m+1,k-1}$ because, since $H^\prime$ is a real matrix, $H_{m+1,k-1}\in \R$. This completes the proof by induction.
\end{proof}
\begin{lemma}\label{normalizing ell for lambda>0}
If a nondegenerate Hermitian form $\ell$ and antilinear operator $A$ are represented respectively by the $k\times k$ matrices $H$ and $J_{\lambda,k}$, where $H$ is a Hankel matrix satisfying 
\[
H_{i,j}=0\quad\quad\forall\, i+j\leq k,
\]
then there is a basis with respect to which $\ell$ and $A$ are represented by $\pm S_k$ and $J_{\lambda,k}$ respectively.
\end{lemma}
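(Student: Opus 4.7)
The plan is to look for a change of basis whose transition matrix $M$ satisfies $P := M^{-1}$ being a real upper-triangular Toeplitz matrix, i.e., a polynomial in $T_k$. Such a $P$ automatically commutes with $J_{\lambda,k} = \lambda I_k + T_k$ (since $\lambda \in \R$) and satisfies $\overline{P} = P$, so by \eqref{transformation} the matrix of $A$ in the new basis equals $P^{-1} J_{\lambda,k} \overline{P} = P^{-1} J_{\lambda,k} P = J_{\lambda,k}$ regardless of the specific choice of $P$. The entire content of the lemma is thus to choose $p$ so that, with $P := p(T_k)$, one has $P^T H P = \pm S_k$.

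To get a handle on $H$, write $h_m$ for the common value of the entries $H_{i,j}$ on the antidiagonal $i+j = k+m$; these are real since $H$ is both Hermitian and (by Lemma \ref{zero diagonals in H lemma}) Hankel. The vanishing condition $H_{i,j} = 0$ for $i+j \leq k$ together with the Hankel structure are captured compactly by the factorization $H = S_k\,g(T_k)$, where
\begin{equation*}
g(x) := h_1 + h_2 x + h_3 x^2 + \cdots + h_k x^{k-1} \in \R[x];
\end{equation*}
this is verified entrywise using $(S_k X)_{i,j} = X_{k+1-i,j}$ and $(g(T_k))_{m,j} = h_{j-m+1}$ for $j \geq m$. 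Nondegeneracy of $\ell$ forces $\det H = \pm h_1^k \neq 0$, so $h_1 \neq 0$.

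Setting $P = p(T_k)$ for a real polynomial $p$ of degree $<k$ to be determined, the elementary identity $T_k^T S_k = S_k T_k$ propagates to $p(T_k^T) S_k = S_k\,p(T_k)$, whence
\begin{equation*}
P^T H P \;=\; p(T_k^T)\,S_k\,g(T_k)\,p(T_k) \;=\; S_k\,\bigl(p(T_k)^2\,g(T_k)\bigr).
\end{equation*}
Since $T_k^k = 0$, the requirement $P^T H P = \pm S_k$ becomes the scalar congruence $p(x)^2 g(x) \equiv \pm 1 \pmod{x^k}$ in $\R[x]$. Choosing the sign to be $\mathrm{sgn}(h_1)$ makes $\pm 1/g(x)$ a formal power series in $\R[[x]]$ with positive real constant term $1/|h_1|$, which therefore admits a real formal power series square root (extracted by Newton iteration from the constant term $1/\sqrt{|h_1|}$); truncating modulo $x^k$ yields a real polynomial $p$ of degree $<k$ solving the congruence. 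The corresponding $P = p(T_k)$ is invertible because its diagonal entries all equal $p(0) = 1/\sqrt{|h_1|} \neq 0$, which finishes the proof.

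The only step that requires genuine work is the factorization $H = S_k\,g(T_k)$ and the resulting reduction to the scalar congruence via $p(T_k^T) S_k = S_k\,p(T_k)$; once this algebraic encoding is established, the existence of $p$ is a routine formal power series manipulation, so I anticipate no serious obstacle beyond bookkeeping.
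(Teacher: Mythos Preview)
Your proof is correct and is essentially the same argument as the paper's: both choose a real upper-triangular Toeplitz change of basis (so that $J_{\lambda,k}$ is preserved), write $H=S_k\,g(T_k)$, use the identity $p(T_k)^{T}S_k=S_k\,p(T_k)$ (equivalently $M^*=S_kMS_k$) to reduce $P^*HP=\pm S_k$ to $p^2g\equiv\pm1\pmod{T_k^k}$, and then solve the latter recursively---the paper does the recursion by matching super-diagonals, while you phrase the same recursion as taking a formal power series square root of $\pm 1/g$.
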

\begin{proof}
Every transformation of the matrices representing $\ell$ and $A$ given by the rule \eqref{transformation} can be induced by a change of basis, so it will suffice to find $M$ such that 
\begin{align}\label{tranformation rule normalizing H}
M^* H M=S_k\quad \mbox{ and }\quad M^{-1}J_{\lambda, k}\overline{M}=J_{\lambda, k}.
\end{align}
To satisfy $M^{-1}J_{\lambda, k}\overline{M}=J_{\lambda, k}$, let us suppose $M$ is a real upper-triangular Toeplitz matrix, and define $h_0,\ldots, h_{k-1}\in \R$ and $\alpha_0,\ldots, \alpha_{k-1} \in \R$ to be the coefficients for which 
\[
H=S_{k}\left(\sum_{i=1}^{k}h_{i-1}T_{k}^{i-1}\right)\quad\mbox{ and }\quad M=\sum_{i=1}^{k}\alpha_{i-1}T_k^{i-1}.
\]
Note, $h_1,\ldots, h_k$ must be real because $H$ is Hermitian and Hankel. For our particular choice of $M$, we have $M^*=S_k M S_k$, so
\[
M^* H M=S_{k}\left(\sum_{i=1}^k\alpha_{i-1}T_k^{i-1}\right)\left(\sum_{i=1}^{k}h_{i-1}T_{k}^{i-1}\right)\left(\sum_{i=1}^k\alpha_{i-1}T_k^{i-1}\right)=S_k\left(\sum_{i=0}^{k-1} \sum_{r+s+t=i}\alpha_r\alpha_s h_t T_{k}^i\right).
\]
Therefore, we need to solve the equation
\begin{align}\label{Toeplitz coefficients normalizing H}
\sum_{i=0}^{k-1} \sum_{r+s+t=i}\alpha_r\alpha_s h_t T_{k}^i=\pm I_k,
\end{align}
that is, we need to choose $\alpha_i$ such that \eqref{Toeplitz coefficients normalizing H} holds. Comparing entries of the main diagonal in \eqref{Toeplitz coefficients normalizing H}, we find that $\alpha_0= h_0^{-1/2}$, so let us choose $\alpha_0=|h_0|^{-1/2}$. Note, $h_0\neq 0$ because $\ell$ is nondegenerate, and hence this choice of $\alpha_0$ is well defined.  Having fixed $\alpha_0$, comparing entries in the first super-diagonal of \eqref{Toeplitz coefficients normalizing H} shows that we can choose $\alpha_1$ as the solution to a linear equation with real coefficients so that entries in the first super-diagonal of \eqref{Toeplitz coefficients normalizing H} match. Proceeding similarly, for $1<j< k$, after choosing $\alpha_0,\ldots, \alpha_{j-1}$ so that entries in the main diagonal and the first $j-1$ super-diagonals of \eqref{Toeplitz coefficients normalizing H} match, comparing entries in the $j$ super-diagonal of \eqref{Toeplitz coefficients normalizing H} shows that we can choose $\alpha_{j}$ as the solution to a linear equation with real coefficients so that entries in the $j$ super-diagonal of \eqref{Toeplitz coefficients normalizing H} match; moreover, the variable $\alpha_j$ does not appear in the first $j-1$ super-diagonals of \eqref{Toeplitz coefficients normalizing H}, so, by choosing $\alpha_j$ in this way, we ensure that entries the first $j$ super-diagonals of \eqref{Toeplitz coefficients normalizing H} match. By choosing $\alpha_0,\ldots, \alpha_k$ in this way we obtain \eqref{tranformation rule normalizing H} by construction.
\end{proof}

\begin{lemma}\label{reducing the general positive eigenvalue case}
There exists a vector $v\in W_\lambda^{(n)}$ such that the space in \eqref{A inv nondeg space} is an $s_1$-dimensional $A$-invariant space on which $\ell$ is nondegenerate.
\end{lemma}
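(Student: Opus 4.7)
The plan is to find $v$ in the real vector space $\widetilde{W}_\lambda^{(s_1)}=\ker(A-\lambda I)^{s_1}$ with $(A-\lambda I)^{s_1-1}v\neq 0$ and, crucially, $\ell\bigl(v,(A-\lambda I)^{s_1-1}v\bigr)\neq 0$. Such a $v$ satisfies all the lemma's requirements: $A$-invariance of $V$ follows from $(A-\lambda I)^{s_1}v=0$, which makes $A(A-\lambda I)^k v=(A-\lambda I)^{k+1}v+\lambda(A-\lambda I)^k v\in V$; the $\C$-dimension equals $s_1$ because the vectors $(A-\lambda I)^k v$ drop through the filtration pieces $\widetilde{W}_\lambda^{(s_1-k)}\setminus\widetilde{W}_\lambda^{(s_1-k-1)}$ and are therefore $\R$-independent in $\widetilde{W}_\lambda^{(s_1)}$, hence $\C$-independent in $W$ by Lemma~\ref{half space lemma}; and, after reordering to the basis $\bigl((A-\lambda I)^{s_1-1}v,\ldots,(A-\lambda I)v,v\bigr)$ so that $A|_V$ is represented by $J_{\lambda,s_1}$, Lemma~\ref{zero diagonals in H lemma} forces the matrix of $\ell|_V$ to be Hankel with zeros strictly below the anti-diagonal, whose determinant is (up to sign) the $s_1$-th power of the anti-diagonal value $\ell\bigl(v,(A-\lambda I)^{s_1-1}v\bigr)$.

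The substantive task is producing $v$ with $\ell\bigl(v,(A-\lambda I)^{s_1-1}v\bigr)\neq 0$, which I would handle by contradiction, assuming the $\R$-quadratic form $Q(v):=\ell\bigl(v,(A-\lambda I)^{s_1-1}v\bigr)$ vanishes identically on $\widetilde{W}_\lambda^{(s_1)}$. Polarization yields $\ell\bigl(v,(A-\lambda I)^{s_1-1}w\bigr)+\ell\bigl(w,(A-\lambda I)^{s_1-1}v\bigr)=0$ for all $v,w\in\widetilde{W}_\lambda^{(s_1)}$; setting $w\in\widetilde{W}_\lambda^{(s_1-1)}$ kills the first summand and shows that $\mathcal K:=(A-\lambda I)^{s_1-1}\bigl(\widetilde{W}_\lambda^{(s_1)}\bigr)\subset\widetilde{W}_\lambda^{(1)}$ is $\ell$-orthogonal to $\widetilde{W}_\lambda^{(s_1-1)}$, hence, by Lemma~\ref{half space lemma}, to $W_\lambda^{(s_1-1)}$. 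The main obstacle is upgrading this to $\mathcal K\perp_\ell W_\lambda^{(s_1)}$: once achieved, nondegeneracy of $\ell$ on $W_\lambda^{(s_1)}$ (from the $\ell$-orthogonal decomposition \eqref{Cn decomposition}) forces $\mathcal K=0$, so $(A-\lambda I)^{s_1-1}$ vanishes on $\widetilde{W}_\lambda^{(s_1)}$, giving $\widetilde{W}_\lambda^{(s_1)}=\widetilde{W}_\lambda^{(s_1-1)}$ and, again by Lemma~\ref{half space lemma}, $W_\lambda^{(s_1)}=W_\lambda^{(s_1-1)}$, contradicting the minimality of $s_1$.

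I would carry out the promotion using $\ell$-self-adjointness twice. First, on $\widetilde{W}_\lambda^{(s_1)}$ the identity $(A^2-\lambda^2 I)^{s_1-1}=(2\lambda)^{s_1-1}(A-\lambda I)^{s_1-1}$ holds, since all higher terms of the binomial expansion of $(A+\lambda I)^{s_1-1}=\bigl((A-\lambda I)+2\lambda I\bigr)^{s_1-1}$ are annihilated by $(A-\lambda I)^{s_1-1}$ on this space; combined with linear $\ell$-self-adjointness of $A^2$, this yields $\ell\bigl((A-\lambda I)^{s_1-1}v,w\bigr)=\ell\bigl(v,(A-\lambda I)^{s_1-1}w\bigr)$ for $v,w\in\widetilde{W}_\lambda^{(s_1)}$, and together with the polarization identity and $\ell$'s Hermiticity it forces $\ell(v,u)\in i\R$ whenever $v\in\widetilde{W}_\lambda^{(s_1)}$ and $u\in\mathcal K$. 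Second, since $\mathcal K\subset\ker(A-\lambda I)$ every $u\in\mathcal K$ satisfies $Au=\lambda u$, so antilinear self-adjointness gives $\ell(Av,u)=\ell(Au,v)=\lambda\overline{\ell(v,u)}$; on the other hand $\ell(Av,u)=\ell\bigl((A-\lambda I)v,u\bigr)+\lambda\ell(v,u)$, and since $(A-\lambda I)v\in\widetilde{W}_\lambda^{(s_1-1)}$ is $\ell$-orthogonal to $\mathcal K$ the first summand vanishes, forcing $\ell(v,u)=\overline{\ell(v,u)}\in\R$. Combining the two conclusions produces $\ell(v,u)\in\R\cap i\R=\{0\}$, which via Lemma~\ref{half space lemma} and Corollary~\ref{decomposition corollary} extends to $\mathcal K_{\C}\perp_\ell W_\lambda^{(s_1)}$ and completes the contradiction.
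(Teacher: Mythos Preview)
Your argument is correct and follows a genuinely different route from the paper's. The paper proceeds constructively: it invokes the Gohberg--Lancaster--Rodman form for $(\ell,A^2)$ to produce $v'$ with $\ell\bigl(v',(A^2-\lambda^2I)^{s_1-1}v'\bigr)\neq 0$, decomposes $v'=v'_++iv'_-$, and then analyzes the trigonometric expression $\theta\mapsto\ell\bigl((e^{i\theta}v')_+,(A-\lambda I)^{s_1-1}(e^{i\theta}v')_+\bigr)$ as a finite Fourier series whose coefficients cannot all vanish, thereby exhibiting an explicit $v=(e^{i\theta}v')_+$. You instead argue by contradiction, polarizing the vanishing quadratic map $Q$ on the real space $\widetilde{W}_\lambda^{(s_1)}$ and exploiting both the linear self-adjointness of $A^2$ (to get $\ell(v,u)\in i\R$) and the antilinear self-adjointness of $A$ (to get $\ell(v,u)\in\R$), which together force the image $\mathcal K$ into the radical of $\ell|_{W_\lambda^{(s_1)}}$.

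Your approach is more self-contained (it does not call on \cite{gohberg2006indefinite}) and isolates cleanly the algebraic mechanism---the interplay of the two self-adjointness conditions---that makes the lemma work; the paper's approach has the virtue of being constructive and of making visible exactly which rotation of a GLR chain generator yields the desired $v$. The concluding verification (that $A$-invariance, dimension, and nondegeneracy follow from $v\in\widetilde{W}_\lambda^{(s_1)}$ with $\ell\bigl(v,(A-\lambda I)^{s_1-1}v\bigr)\neq 0$ via Lemma~\ref{half space lemma} and Lemma~\ref{zero diagonals in H lemma}) is the same in both proofs.
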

\begin{proof}

It can be seen from the Gohberg--Lancaster--Rodman canonical form for $\ell$ and $A^2$ (given in \cite[Theorem 5.1.1]{gohberg2006indefinite} and summarized in Remark \ref{Gohberg--Lancaster--Rodman}) that there exists a vector $v^\prime\in W_\lambda^{(s_1)}$ for which 
\begin{align}\label{GLR nonzero pairing}
\ell\left(v^\prime,(A^2-\lambda^2I)^{(s_1-1)}v^\prime\right)\neq 0.
\end{align}
Using the decomposition of Corollary \ref{decomposition corollary}, define the coefficients
\[
a_0:=\ell\bigl(v^\prime_+,(A-\lambda I)^{(s_1-1)}v^\prime_+\bigr)+\ell\bigl(v^\prime_-,(A-\lambda I)^{(s_1-1)}v^\prime_-\bigr),
\]
\[
a_1:=\ell\bigl(v^\prime_+,(A-\lambda I)^{(s_1-1)}v^\prime_+\bigr)-\ell\bigl(v^\prime_-,(A-\lambda I)^{(s_1-1)}v^\prime_-\bigr),
\]
and  
\[
b_1:=\ell\bigl(v^\prime_-,(A-\lambda I)^{(s_1-1)}v^\prime_+\bigr)-\ell\bigl(v^\prime_+,(A-\lambda I)^{(s_1-1)}v^\prime_-\bigr).
\]
By direct computation, we obtain the finite Fourier series
\begin{align} \label{nonzero pairing expanded a}
2(2\lambda)^{1-s_1}\ell\left(\left(e^{i\theta}v^\prime\right)_{+},(A-\lambda I)^{s_1-1}\left(e^{i\theta}v^\prime\right)_{+}\right)&=a_0+a_1\cos(2\theta)+ b_1\sin(2\theta)
\end{align}
Also, since $v^\prime_+,v^\prime_-\in \widetilde{W}_\lambda^{(s_1)}$, $A(A-\lambda I)^{s_1-1}v^\prime_+=\lambda(A-\lambda I)^{s_1-1}v^\prime_+$ and $A(A+\lambda I)^{s_1-1}iv^\prime_- =-\lambda (A+\lambda I)^{s_1-1}iv^\prime_-$, and hence
\begin{align*}
(A^2-\lambda^2I)^{(s_1-1)}v^\prime &=(A+\lambda I)^{(s_1-1)}(A-\lambda I)^{(s_1-1)}v^\prime_++(A-\lambda I)^{(s_1-1)}(A+\lambda I)^{(s_1-1)}iv^\prime_-\\
&=(2\lambda)^{s_1-1}(A-\lambda I)^{(s_1-1)}v^\prime_++i(2\lambda)^{s_1-1}(A-\lambda I)^{(s_1-1)}v^\prime_-.
\end{align*}
So, by \eqref{GLR nonzero pairing},
\begin{align}\label{nonzero pairing expanded b}
0 &\neq (2\lambda)^{1-s_1}\ell\left(v^\prime,(A^2-\lambda^2I)^{(s_1-1)}v^\prime\right) =a_0+ib_1.
\end{align}

If the left side of \eqref{nonzero pairing expanded a} is zero for all $\theta\in \R$ then $a_0=a_1=b_1=0$, so, by \eqref{GLR nonzero pairing}, there exists $\theta\in\R$ such that 
\begin{align}\label{nonzero pairing}
 \ell\left(\left(e^{i\theta}v^\prime\right)_{+},(A-\lambda I)^{s_1-1}\left(e^{i\theta}v^\prime\right)_{+}\right)\neq 0.
 \end{align}


Fixing $\theta\in \R$ so that \eqref{nonzero pairing} holds, define
\begin{align}\label{top v for v-chain}
v:=\left(e^{i\theta}v^\prime\right)_+,
\end{align}
so, by  \eqref{nonzero pairing},
\begin{align}\label{new nonzero pairing}
\ell\bigl(v,(A-\lambda I)^{s_1-1}v\bigr)\neq0.
\end{align}
Proceeding, let $V$ be as in \eqref{A inv nondeg space} with $v$ as in \eqref{top v for v-chain}. Define basis vectors 
\[
e_i=(A-\lambda I)^{i-1}v\quad\quad\quad (i=1,\ldots,s_1).
\]
The matrix representing the restriction $A|_V$ of $A$ to $V$ with respect to the basis $\{e_i\}_{1\leq i\leq s_1}$ is $J_{\lambda,s_1}$. Let $H$ be the matrix representing the restriction of $\ell$ to $V$ with respect to the basis $\{e_i\}_{1\leq i\leq s_1}$. Since $A$ is $\ell$-self-adjoint, $H J_{\lambda,s_1}$ is symmetric. Therefore, applying Lemma \ref{zero diagonals in H lemma}, $H$ is a Hankel matrix satisfying 
\[
H_{i,j}=0\quad\quad\forall i+j\leq k,
\]
and hence, by \eqref{new nonzero pairing},
\[
\det H=\sqrt{2}\sin\left(\frac{2k\pi+\pi}{4}\right) H_{1,k}^k=\sqrt{2}\sin\left(\frac{2k\pi+\pi}{4}\right)\Bigl(\ell\left(v, (A-\lambda I)^{s_1-1}v\right)\Bigr)^k\neq0.
\]
That is, $\ell$ is nondegenerate on $V$, as was needed.
\end{proof}

\begin{corollary}\label{C and ell normalized dim 1 eSpace}
There is an $s_1$-dimensional $A$-invariant space $V$ on which $\ell$ is nondegenerate, and there is a basis of $V$ with respect to which the restrictions $\ell|_V$ and $A|_V$ of $\ell$ and $A$ to $V$ are represented by the matrices $\pm H_{\lambda,s_1}$ and $C_{|\lambda|,s_1}$ respectively.
\end{corollary}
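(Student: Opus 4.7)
The plan is to stitch together Lemmas \ref{reducing the general positive eigenvalue case}, \ref{zero diagonals in H lemma}, and \ref{normalizing ell for lambda>0}. First I would invoke Lemma \ref{reducing the general positive eigenvalue case} to produce a vector $v$ (necessarily lying in $\widetilde{W}_\lambda^{(s_1)}$ by its construction in \eqref{top v for v-chain}) for which the space
\[
V := \text{span}_{\C}\left\{v, (A-\lambda I)v, \ldots, (A-\lambda I)^{s_1-1}v\right\}
\]
is $s_1$-dimensional, $A$-invariant, and carries a nondegenerate restriction of $\ell$.

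Next I would take $e_i := (A-\lambda I)^{i-1} v$ as a basis for $V$. The relation $(A-\lambda I)^{s_1}v = 0$, which follows from $v\in \widetilde{W}_\lambda^{(s_1)}$ and the definition \eqref{filt3}, ensures that the matrix representing $A|_V$ in this basis is exactly the Jordan block $J_{\lambda, s_1}$. If $H$ denotes the Hermitian matrix representing $\ell|_V$ in this basis, then the $\ell$-self-adjointness of $A|_V$ translates to $H J_{\lambda,s_1}$ being symmetric, so Lemma \ref{zero diagonals in H lemma} applies and $H$ is a Hankel matrix with $H_{i,j}=0$ for $i+j\leq s_1$.

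Finally I would apply Lemma \ref{normalizing ell for lambda>0} to $V$ to obtain a further change of basis that preserves the matrix of $A|_V$ as $J_{\lambda, s_1}$ while transforming $H$ into $\pm S_{s_1}$. Because $\lambda>0$ under the standing hypothesis $\lambda^2>0$ (with $\lambda$ the principal square root), we have $\lambda = |\lambda|$, $C_{|\lambda|, s_1}= J_{\lambda, s_1}$, and $H_{\lambda, s_1}= S_{s_1}$, yielding precisely the claimed canonical form.

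I do not anticipate any substantial obstacle: the corollary is a bookkeeping combination of the three preceding lemmas. The only small point requiring care is confirming that $v \in \widetilde{W}_\lambda^{(s_1)}$ (and hence that the matrix of $A|_V$ is \emph{exactly} $J_{\lambda,s_1}$, not merely congruent to it up to a nilpotent error), but this is immediate from the construction $v = (e^{i\theta}v')_+$ with $v' \in W_\lambda^{(s_1)}$ combined with the decomposition supplied by Corollary \ref{decomposition corollary} (equivalently, Lemma \ref{half space lemma}) applied at level $s_1$.
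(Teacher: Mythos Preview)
Your proposal is correct and follows essentially the same route as the paper: invoke Lemma \ref{reducing the general positive eigenvalue case} (together with the internal observation that the resulting $v$ lies in $\widetilde{W}_\lambda^{(s_1)}$, so $A|_V$ is represented by $J_{\lambda,s_1}$ and $H$ is Hankel with the required vanishing by Lemma \ref{zero diagonals in H lemma}), then apply Lemma \ref{normalizing ell for lambda>0}. The only cosmetic difference is that the paper's proof does not appeal to the principal-square-root convention to conclude $\lambda=|\lambda|$; instead it explicitly treats the possibility $\lambda<0$ by passing to the basis $\{ie_1,\ldots,ie_{s_1}\}$, which flips the sign in the Jordan block while leaving $S_{s_1}$ unchanged.
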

\begin{proof}
By Lemma \ref{reducing the general positive eigenvalue case}, there exists an $s_1$-dimensional $A$-invariant space $V$ on which $\ell$ is nondegenerate and there exists a basis of $V$ with respect to which the restrictions $\ell|_V$ and $A|_V$ of $\ell$ and $A$ to $V$ are represented by the matrices $H$ and $J_{\lambda,{s_1}}$, where $H$ is a Hankel matrix satisfying 
\[
H_{i,j}=0\quad\quad\forall\, i+j\leq s_1.
\]
Therefore, by Lemma \ref{normalizing ell for lambda>0}, there is a basis $\{e_1,\ldots,e_{s_1}\}$ of $V$ with respect to which $\ell|_V$ and $A|_V$ are represented by $S_{s_1}$ and $J_{\lambda,s_1}$ respectively. If $\lambda>0$ then this completes the proof because $J_{\lambda,s_1}=C_{|\lambda|,s_1}$. If, on the other hand, $\lambda<0$ then we observe $\ell|_V$ and $A|_V$ are represented by $S_{s_1}$ and $J_{-\lambda,s_1}=C_{|\lambda|,s_1}$ with respect to the basis $\{ie_1,\ldots,ie_{s_1}\}$. So, in either case, we can find a basis with respect to which $\ell|_V$ and $A|_V$ are represented by $H_{\lambda,s_1}=S_{s_1}$ and $C_{|\lambda|,s_1}$.
\end{proof}

For the following proposition, let $r_1,\ldots,r_{n_\lambda}$ and $s_1,\ldots,s_{n_\lambda}$ be the positive integers satisfying $s_i>s_{i+1}$ such that the restriction of $A^2$ to $W_\lambda^{(n)}$ has a Jordan canonical form with $r_i$ Jordan blocks of size $s_i\times s_i$. Note, this definition is consistent with the previous definition of $s_1$, and 
\[
W_\lambda^{(n)}\cong\C^\mu\quad\mbox{ where }\quad{\mu=\sum_{i=1}^{n_\lambda}r_is_i}.
\]
\begin{proposition}\label{real e-val canonical form prop}
There is a basis of $W_\lambda^{(n)}$  with respect to which the restrictions of $\ell$ and $A$ to $W_\lambda^{(n)}$ are represented by the matrices
\[
\bigoplus_{i=1}^{n_\lambda}\left(\bigoplus_{j=1}^{r_i}\epsilon_{i,j}H_{\lambda,s_i}\right)\quad\mbox{ and }\quad \bigoplus_{i=1}^{n_\lambda}\left(\bigoplus_{j=1}^{r_i}C_{|\lambda|,s_i}\right)\quad\mbox{ where }\epsilon_{i,j}=\pm1
\]
respectively.
\end{proposition}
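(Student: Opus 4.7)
The plan is induction on $\dim W_\lambda^{(n)}$, peeling off one $A$-invariant, $\ell$-nondegenerate block of maximal size at a time using Corollary \ref{C and ell normalized dim 1 eSpace} and Lemma \ref{ell-orthogonal complement}.

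First I would record that $\ell$ restricted to $W_\lambda^{(n)}$ is nondegenerate: this is immediate from the $\ell$-orthogonal decomposition \eqref{Cn decomposition} together with the nondegeneracy of $\ell$ on $W$. The base case $\dim W_\lambda^{(n)}=0$ is vacuous. For the inductive step, apply Corollary \ref{C and ell normalized dim 1 eSpace} to obtain an $s_1$-dimensional $A$-invariant subspace $V \subset W_\lambda^{(n)}$ with $\ell|_V$ nondegenerate, together with a basis in which $\ell|_V$ and $A|_V$ are represented by $\epsilon\, H_{\lambda,s_1}$ (with sign $\epsilon=\pm 1$) and $C_{|\lambda|,s_1}$ respectively. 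By Lemma \ref{ell-orthogonal complement}, the $\ell$-orthogonal complement $V^{\perp_\ell}$ taken inside $W_\lambda^{(n)}$ is $A$-invariant, and nondegeneracy of $\ell|_V$ gives the $\ell$-orthogonal, $A$-invariant direct-sum decomposition $W_\lambda^{(n)} = V \oplus V^{\perp_\ell}$.

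Next I would check that this decomposition cuts off exactly one Jordan block of size $s_1$ from the $A^2$-Jordan structure. Since in our chosen basis $A|_V$ is $J_{|\lambda|,s_1}$, the operator $A^2|_V$ equals $J_{|\lambda|,s_1}^2$, which is similar to the single Jordan block $J_{\lambda^2,s_1}$. By uniqueness of the Jordan form under $A^2$-invariant direct-sum decompositions, $A^2|_{V^{\perp_\ell}}$ has Jordan structure consisting of $r_1-1$ blocks of size $s_1$ (absent altogether if $r_1=1$) together with $r_j$ blocks of size $s_j$ for $j \geq 2$.

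Now the induction hypothesis applies to $V^{\perp_\ell}$, producing a basis in which the restrictions of $\ell$ and $A$ to $V^{\perp_\ell}$ have the block-diagonal form in the statement, with one fewer $H_{\lambda,s_1}\oplus C_{|\lambda|,s_1}$ summand. Concatenating the basis of $V$ with that of $V^{\perp_\ell}$ yields the desired canonical form on $W_\lambda^{(n)}$. The only obstacles along the way are bookkeeping: tracking that the signs $\epsilon_{i,j}$ in the $H_{\lambda,s_i}$ blocks arise independently from each invocation of Corollary \ref{C and ell normalized dim 1 eSpace}, and confirming that the maximal Jordan block size decreases by one step in the multiplicity sequence at each induction step so that the indexing by $(r_i,s_i)$ matches the statement; both are routine once the invariance of Jordan structure under $\ell$-orthogonal splitting is in hand.
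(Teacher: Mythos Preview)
Your proof is correct and follows essentially the same approach as the paper: peel off one maximal $A$-invariant, $\ell$-nondegenerate block using Corollary~\ref{C and ell normalized dim 1 eSpace}, split via Lemma~\ref{ell-orthogonal complement}, and iterate (the paper phrases it as iteration $\sum r_i$ times rather than induction on dimension, but the argument is identical). Your added bookkeeping about the Jordan structure of $A^2$ on $V$ and $V^{\perp_\ell}$ makes explicit what the paper leaves implicit in its iteration count.
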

\begin{proof}
By Corollary \ref{C and ell normalized dim 1 eSpace}, there is a space $V\subset W_\lambda^{(n)}$ that is $A$-invariant and  $\ell$-nondegenerate on which $\ell$ and $A$  can be represented by matrices of the desired form. By Lemma \ref{ell-orthogonal complement}, we can normalize $\ell$ and $A$ on $V$ and the $\ell$-orthogonal complement $V^{\perp_\ell}$ of $V$ separately, so we can repeat this process, applying Corollary \ref{C and ell normalized dim 1 eSpace} to $V^{\perp_\ell}$ rather than $W_\lambda^{(n)}$. Iterating the process $\sum_{i=1}^{n_\lambda}r_i$ times completes  the normalization.
\end{proof}

\subsection{Treating Generalized Eigenspaces with Eigenvalue Zero}\label{zero eigenvalue subsection} 
In this subsection we construct a canonical form for the restrictions of $\ell$ and $A$ to the space $W_0^{(n)}$. Our approach is the same as in the proof of Theorem 4.5 in \citep{porter2017absolute}. 
\begin{proposition}\label{zero e-val canonical form prop}
The space $W_0^{(n)}$ can be decomposed into $A$-invariant, pairwise $\ell$-orthogonal subspaces such that there exists a basis with respect to which the restrictions of $\ell$ and $A$ to the decomposition's component subspaces are represented by matrices of the form $C_{0,k}$ and $H_{0,k}$.
\end{proposition}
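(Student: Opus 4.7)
The plan is to adapt the strategy of Subsection~\ref{positive eigenvalue subsection}: iteratively split off an $A$-invariant, $\ell$-nondegenerate cyclic subspace and invoke Lemma~\ref{ell-orthogonal complement} on its $\ell$-orthogonal complement. Since $A^2$ is nilpotent on $W_0^{(n)}$, so is $A$; let $s_1$ denote the smallest positive integer with $A^{s_1}|_{W_0^{(n)}}=0$. The first step is to produce $v\in W_0^{(n)}$ with $A^{s_1-1}v\neq 0$ such that $V:=\text{span}_\C\{v,Av,\ldots,A^{s_1-1}v\}$ is $A$-invariant (automatic from $A^{s_1}v=0$) and $\ell|_V$ is nondegenerate.

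The crux is to find $v$ satisfying $\ell(v,A^{s_1-1}v)\neq 0$. I would introduce the form $b(v,w):=\ell(v,A^{s_1-1}w)$ on $W_0^{(n)}$. Iterating \eqref{seldef} together with the Hermitian property of $\ell$ yields the identities $\ell(A^{2k}x,y)=\ell(x,A^{2k}y)$ and $\ell(A^{2k+1}x,y)=\ell(A^{2k+1}y,x)$, from which one reads off that $b$ is Hermitian sesquilinear when $s_1-1$ is even and symmetric $\C$-bilinear when $s_1-1$ is odd. In either case $b\not\equiv 0$ (because $A^{s_1-1}$ is nonzero on $W_0^{(n)}$ and $\ell$ is nondegenerate), so polarization, which applies to both Hermitian and symmetric $\C$-bilinear forms, produces the desired $v$. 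In the bilinear case the phase-averaging argument used in Lemma~\ref{reducing the general positive eigenvalue case} is an alternative.

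For such $v$, the same identities show that the Gram matrix $H$ of $\ell|_V$ in the basis $\{A^iv\}_{i=0}^{s_1-1}$ vanishes strictly above the anti-diagonal (each entry reduces to $\ell(v,A^{i+j-2}v)$ or $\ell(A^{i+j-2}v,v)$, which is zero once $i+j>s_1+1$) and has its anti-diagonal entries determined by $\alpha:=\ell(v,A^{s_1-1}v)\neq 0$. Consequently $\det H\neq 0$, which simultaneously establishes $\C$-linear independence of the chain and nondegeneracy of $\ell|_V$. A change of basis of the form $\tilde e_i=\sum_{j\geq i}c_{j-i}\,A^{j-1}v$ preserves the Jordan chain (so that $A|_V$ remains represented by $T_{s_1}=C_{0,s_1}$) and, by the triangular Toeplitz argument of Lemma~\ref{normalizing ell for lambda>0} combined with the scalar rescaling $v\mapsto \beta v$ and the chain-phase rescaling $A^iv\mapsto \beta_iA^iv$ subject to $\beta_{i+1}=\overline{\beta_i}$, reduces $H$ to $S_{s_1}=H_{0,s_1}$. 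Lemma~\ref{ell-orthogonal complement} then yields the $A$-invariant $\ell$-orthogonal decomposition $W_0^{(n)}=V\oplus V^{\perp_\ell}$ with $V^{\perp_\ell}$ of strictly smaller dimension, and iterating on $V^{\perp_\ell}$ finishes the proof.

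I expect the principal obstacles to be the parity-dependent structure of $b$, which requires separate treatment of the Hermitian and symmetric bilinear cases to guarantee $b(v,v)\neq 0$, and, more subtly, the bookkeeping needed when $s_1-1$ is even: in that subcase $\alpha$ is automatically real and the obvious rescalings only multiply $\alpha$ by a positive real, so the verification that the combined chain-phase rescaling suffices to eliminate the sign of $\alpha$ (thereby producing $H_{0,s_1}$ rather than $\pm H_{0,s_1}$) is the most delicate step of the argument.
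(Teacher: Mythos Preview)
Your approach is essentially the paper's: produce a top vector $v$ with $\ell(v,A^{s_1-1}v)\neq0$, verify that the cyclic span is $A$-invariant and $\ell$-nondegenerate, normalize the Gram matrix along the chain, and iterate via Lemma~\ref{ell-orthogonal complement}. The only cosmetic differences are that the paper obtains $v$ by diagonalizing $HC^{s_1-1}$ (invoking Takagi's factorization in the symmetric case) rather than by polarization, and that it normalizes $H$ by solving $\ell(e_1,e_j)=0$ directly for the coefficients of $e_1=\tilde e_1+\alpha_2\tilde e_2+\cdots+\alpha_k\tilde e_k$ rather than through the Toeplitz device of Lemma~\ref{normalizing ell for lambda>0} (the latter would in any case need adaptation here, since antilinearity forces the chain coefficients to alternate under conjugation).

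Two remarks. Your parenthetical ``$i+j>s_1+1$'' locates entries \emph{below} the anti-diagonal; those are the ones that vanish automatically, while the entries above are in general nonzero before the normalization step. More importantly, your concern about the sign is well placed but the resolution is not what you anticipate: when $s_1$ is odd (so $b$ is Hermitian) the sign of $\alpha=b(v,v)$ is a genuine invariant---every chain-phase rescaling multiplies it by $|\beta_0|^2>0$---so one can only reach $\pm S_{s_1}$, not $S_{s_1}$. This is exactly consistent with the $\pm H_{\lambda,k}$ appearing in Theorem~\ref{simultaneous canonical form theorem}; the absence of the $\pm$ in the proposition's statement (and in the paper's displayed ``$\ell(e_1,e_k)=1$'') is a slip, as the paper's own computation a few lines earlier correctly yields $\pm1$.
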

\begin{proof}
Let 
\[
k=\min \left\{ 
m\in\N\,:\,\left(A|_{W_\lambda^{(n)}}\right)^m\equiv0
\right\}.
\]
Fix a basis, and let $H$ and $C$ be matrices representing $\ell$ and $A$ with respect to this basis.  If $k$ is odd, then $A^{k-1}$ is $\ell$-self-adjoint linear, which implies $HC^{k-1}$ is Hermitian, and hence there is a basis with respect to which the mapping 
\begin{align}\label{HA^k-1 map}
v\mapsto H\left(A^{k-1}v\right)
\end{align}
is represented by a nonzero diagonal matrix. If, on the other hand, $k$ is even, then $A^{k-1}$ is $\ell$-self-adjoint antilinear, which implies $HC^{k-1}$ is symmetric. By Takagi's theorem  in \citep[Theorem 2]{takagi1924algebraic}, for every symmetric matrix $S$, there exists an invertible matrix $U$ such that $US\overline{U}^{-1}$ is diagonal, and, since the map in \eqref{HA^k-1 map} is antilinear whenever $k$ is even, Takagi's theorem implies that there is a basis with respect to which the mapping in \eqref{HA^k-1 map} is represented by a nonzero diagonal matrix.

For either parity of $k$, these observations imply that there exists a vector $a_1\neq0$ such that
\begin{align}\label{top vector choice zero e-space}
H\left(A^{k-1}a_1\right)=\gamma e^{i\theta}a_1\quad\quad\mbox{ for some } \theta\in\R, \gamma>0.
\end{align}
Furthermore, if $k$ is odd then $\theta\equiv0\pmod{\pi}$ because \eqref{HA^k-1 map} is a linear operator represented by a Hermitian matrix. Accordingly, for $z\in\C$,
\[
\ell(A^{k-1}za_1,za_1)=(HA^{k-1}za_1,za_1)=
\begin{cases}
\pm\gamma  |z|^2 \|a_1\|^2&\mbox{ if $k$ is odd}\\
\gamma e^{i\theta}\overline{z}^2\|a_1\|^2&\mbox{ if $k$ is even}.
\end{cases}
\]
Therefore
\[
\ell\left(A^{k-1} \frac{1}{\sqrt{\gamma\|a_1\|^2}  e^{i\theta/2}}a_1,\frac{1}{\sqrt{\gamma\|a_1\|^2}  e^{i\theta/2}}a_1\right)=\pm1.
\]
Define
\[
\tilde e_i=A^{i-1} \frac{1}{\sqrt{\gamma\|a_1\|^2}  e^{i\theta/2}}a_1,
\]
and define
\[
e_1= \tilde e_1+\alpha_2\tilde e_2+\ldots+\alpha_k  \tilde e_k\quad \mbox{ and }\quad e_i=A^{i-1}e_1,
\]
where the coefficients $\alpha_2,\ldots, \alpha_{k}$ are chosen below.
For all $i+j>k+1$ we have 
\[
\ell(e_i,e_j)=\ell(A^{i-1}e_1,A^{j-1}e_1)=\ell(A^{i+j-2}e_1,e_1)=\ell(0,e_1)=0.
\]
Fix the coefficients $\alpha_2,\ldots, \alpha_{k}$ such that for all $j<k$ we have 
\[
\ell(e_1,e_j)=0.
\]
Since $A$ is $\ell$-self-adjoint, we have
\[
\ell(e_{i},e_{i+j})=\ell(e_{i+j},e_{i}),
\]
so our choices of $\alpha_2,\ldots,\alpha_k$ ensure
\[
\ell(e_i,e_j)=0\quad\quad\forall i+j<k+1.
\]
By construction, 
\[
\ell(e_1,e_k)=\ell(e_j,e_{k+j-1})=1,
\]
so the restrictions of $\ell$ and $A$ to the subspace $\text{span}_\C\{e_1,\ldots, e_k\}$ are represented by $S_k$ and $J_{0,k}$ respectively with respect to the basis $\{e_k,\ldots, e_1\}$. 

By Lemma \ref{ell-orthogonal complement}, we can normalize $\ell$ and $A$ on $\text{span}_\C\{e_1,\ldots, e_k\}$ and the orthogonal complement of $\text{span}_\C\{e_1,\ldots, e_k\}$ separately, so this normalization proceedure can be repeated on the orthoganal complement of $\text{span}_\C\{e_1,\ldots, e_k\}$ until $W_{0}^{(n)}$ is exhausted.
\end{proof}

\subsection{Treating Generalized Eigenspaces with Negative Eigenvalues}\label{negative eigenvalue subsection}
Throughout this subsection we assume $\lambda^2<0$ and that the restriction of $A^2$ to $W_\lambda^{(n)}$ has a Jordan canonical form with $2r_i$ Jordan blocks of size $s_i\times s_i$, where $r_1,\ldots,r_{n_\lambda}$ and $s_1,\ldots,s_{n_\lambda}$ are positive integers satisfying $s_i>s_{i+1}$. 

\begin{proposition}\label{negative e-val canonical form prop}
There is a basis of $W_\lambda^{(n)}$  with respect to which the restrictions of $\ell$ and $A$ to $W_\lambda^{(n)}$ are represented by the matrices
\begin{align}\label{negative e-val canonical form}
\bigoplus_{i=1}^{n_\lambda}\left(\bigoplus_{j=1}^{r_i}\epsilon_{i,j}H_{\lambda,s_i}\right)\quad\mbox{ and }\quad \bigoplus_{i=1}^{n_\lambda}\left(\bigoplus_{j=1}^{r_i}C_{\lambda,s_i}\right)\quad\mbox{ where }\epsilon_{i,j}=\pm1
\end{align}
respectively.
\end{proposition}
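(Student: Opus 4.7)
The plan is to replicate the pattern from subsection~\ref{positive eigenvalue subsection}: I would produce an $A$-invariant, $\ell$-nondegenerate subspace $V \subset W_\lambda^{(n)}$ of dimension $2s_1$ on which $\ell|_V$ and $A|_V$ admit matrix representations $\pm H_{\lambda,s_1}$ and $C_{\lambda,s_1}$, and then iterate on $V^{\perp_\ell}$ via Lemma~\ref{ell-orthogonal complement} exactly as in the proof of Proposition~\ref{real e-val canonical form prop}. The new feature is that since $\lambda \notin \R$ the factorization $A^2 - \lambda^2 I = (A - \lambda I)(A + \lambda I)$ driving Lemmas~\ref{Splitting A^2 filtration} and~\ref{half space lemma} is no longer valid for the antilinear $A$, so $V$ cannot be organized as a single $A$-Jordan chain; instead, $V$ will be a \emph{double} chain, which is precisely the structure encoded by the $2s_1 \times 2s_1$ block shape of $C_{\lambda,s_1}$. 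The key structural fact that survives is that $\overline{\lambda^2} = \lambda^2$, so the antilinear $A$ commutes with the linear operator $A^2 - \lambda^2 I$.

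Given $v \in W_\lambda^{(n)}$ with $(A^2 - \lambda^2 I)^{s_1 - 1} v \neq 0$, I would set
\[
V := \text{span}_\C\bigl\{(A^2 - \lambda^2 I)^j v,\ (A^2 - \lambda^2 I)^j A v : 0 \le j \le s_1 - 1\bigr\}.
\]
$A$-invariance of $V$ follows from the commutation of $A$ with $A^2 - \lambda^2 I$ together with the computation $A \cdot (A^2 - \lambda^2 I)^{s_1 - 1} A v = \lambda^2 (A^2 - \lambda^2 I)^{s_1 - 1} v \in V$, which uses $(A^2 - \lambda^2 I)^{s_1} v = 0$. To verify $\dim_\C V = 2 s_1$, I would first set $u := (A^2 - \lambda^2 I)^{s_1 - 1} v$, a nonzero eigenvector of $A^2$ with eigenvalue $\lambda^2$, and observe that any relation $A u = c u$ would force $A^2 u = \overline{c}\, A u = |c|^2 u$, contradicting $\lambda^2 < 0$; hence $u$ and $Au = (A^2 - \lambda^2 I)^{s_1 - 1} A v$ are $\C$-linearly independent. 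A descending induction applying successive powers of $A^2 - \lambda^2 I$ to any vector in the intersection of the two cyclic subspaces generated by $v$ and by $A v$ then shows that these $s_1$-dimensional subspaces meet only at $0$.

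The main obstacle will be establishing $\ell$-nondegeneracy on $V$ and carrying out the normalization. In the basis $e_i := (A^2 - \lambda^2 I)^{s_1 - i} v$, $f_i := A e_i$ for $i = 1, \dots, s_1$, $\ell$-self-adjointness of $A^2$ (combined with $\lambda^2 \in \R$) forces $\ell(e_i, e_j)$, $\ell(f_i, f_j)$ and $\ell(e_i, f_j)$ to depend only on $i + j$ and to vanish whenever $i + j \le s_1$, so the Gram matrix is a block-anti-triangular, Hankel-type matrix controlled by the four leading values
\[
\alpha := \ell\bigl(v, (A^2 - \lambda^2 I)^{s_1 - 1} v\bigr), \qquad \beta := \ell\bigl(v, (A^2 - \lambda^2 I)^{s_1 - 1} A v\bigr),
\]
together with $\overline\beta$ and $\lambda^2 \alpha$; its determinant equals, up to sign, $(\lambda^2 \alpha^2 - |\beta|^2)^{s_1}$. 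I would choose $v$ via the Gohberg--Lancaster--Rodman form for $(\ell, A^2)$ so that $\alpha \neq 0$; since $\lambda^2 < 0$, this automatically forces $\lambda^2 \alpha^2 - |\beta|^2 < 0$ and delivers nondegeneracy at no extra cost. The final normalization, the analogue of Lemma~\ref{normalizing ell for lambda>0}, would bring $\ell|_V$ to $\pm S_{2 s_1} = \pm H_{\lambda, s_1}$ while preserving $A|_V = C_{\lambda, s_1}$ via a basis change of block-Toeplitz form commuting with $C_{\lambda, s_1}$; its coefficients are obtained by a recursive linear system of real equations analogous to~\eqref{Toeplitz coefficients normalizing H}. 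Iterating on $V^{\perp_\ell}$ a total of $\sum_{i=1}^{n_\lambda} r_i$ times then produces the decomposition in~\eqref{negative e-val canonical form}.
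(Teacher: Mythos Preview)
Your plan is correct and follows essentially the same architecture as the paper's proof: start from the Gohberg--Lancaster--Rodman form of $(\ell, A^2)$, build the $2s_1$-dimensional $A$-invariant subspace $V$ as the double chain generated by $v$ and $Av$ under $A^2-\lambda^2 I$, compute the Hankel-type Gram matrix, and iterate on $V^{\perp_\ell}$. The differences are in two technical steps. First, for nondegeneracy the paper makes a preliminary substitution $a_1 \mapsto \alpha a_1 + \beta A a_1$ to arrange $\ell(a_1, A(A^2-\lambda^2 I)^{s_1-1}a_1)=0$ (your $\beta=0$), whereas you observe directly that $\det H = \pm(\lambda^2\alpha^2 - |\beta|^2)^{s_1}$ is automatically nonzero once $\alpha\neq 0$ because $\lambda^2<0$; your route is shorter and avoids that preliminary step entirely. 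Second, for the final passage to $\pm S_{2s_1}$ the paper does not normalize by an explicit block-Toeplitz change of basis; instead it reaches the intermediate form $\pm\bigl(S_{s_1}\oplus S_{s_1}J_{\lambda^2,s_1}\bigr)$ and then argues by an orbit comparison (applying the already-proved reduction to the pair represented by $\pm S_{2s_1}$ and $C_{\lambda,s_1}$) that the two forms are equivalent. Your proposed direct recursive normalization is workable but would need the $\{M : MC_{\lambda,s_1}=C_{\lambda,s_1}\overline{M}\}$ commutant spelled out; the paper's orbit argument sidesteps that computation.
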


\begin{proof}

Given the Gohberg--Lancaster--Rodman canonical form for $\ell$ and $A^2$ summarized in Remark \ref{Gohberg--Lancaster--Rodman}, there exists a vector $a_1\in W_\lambda^{(n)}$  such that the restrictions of $\ell$ and $A^2$ to the $s_1$-dimensional vector space $\text{span}_\C\{a_1,(A^2-\lambda^2I)a_1,\ldots, (A^2-\lambda^2I)^{s_1-1}a_1\}$ are represented respectively by $S_{s_1}$ and $J_{\lambda^2,s_1}$ with respect to the basis $\{(A^2-\lambda^2I)^{s_1-1}a_1,(A^2-\lambda^2I)^{s_1-2}a_1,\ldots, a_1\}$.

Defining
\[
a_{k+1}=(A^2-\lambda^2I)a_k\quad\mbox{ and }\quad b_k=Aa_k,
\]
and letting $e_1=\alpha a_1+ \beta b_1$, we have
\begin{align*}
\ell\left(e_1,A(A^2-\lambda^2I)^{s_1-1}e_1\right)&=\alpha^2\ell(a_1,b_{s_1})+\alpha\beta\big(\ell(a_1,Ab_{s_1})+\ell(b_1,b_{s_1})\big)+\beta^2\ell(b_1,Ab_{s_1} \\
&=\alpha^2\ell(a_1,b_{s_1})+\alpha\beta\big(\lambda^2\ell(a_1,a_{s_1})+\lambda^2\ell(a_{s_1},a_{1})\big)+\lambda^2\beta^2\ell(b_1,a_{s_1}) \\
&=\alpha^2\ell(a_1,b_{s_1})\pm2\lambda^2\alpha\beta+\lambda^2\beta^2\ell(b_1,a_{s_1}).
\end{align*}
Clearly, either $\ell(a_1,b_{s_1})=0$ or we can choose $\alpha,\beta\in\C$ such that $(\alpha,\beta)\neq(0,0)$ and 
\[
\ell\left(e_1,A(A^2-\lambda^2I)^{s_1-1}e_1\right)=0.
\]
Accordingly, we can assume, by possibly replacing $a_1$ with $e_1$ as defined above, that 
\[
\ell(a_1,b_{s_1})=0.
\]
With this assumption made, we proceed with $e_1$ defined as above, and will determine the coefficients $\alpha$ and $\beta$ later. Note, this assumption implies also that $\ell(b_1,a_{s_1})=0$ because $(A^2-\lambda^2I)$ is an $\ell$-self-adjoint linear operator, and hence
\[
\ell(e_1,A(A^2-\lambda^2I)^{s_1-1}e_1)=\pm2\lambda^2\alpha\beta.
\]

Define
\[
e_k:=(A^2-\lambda^2I)^{k-1}e_1\quad\mbox{ and }\quad e_{s_1+k}:=Ae_k\quad\quad\forall 1\leq k\leq s_1,
\]
and, on the span of $\{e_i\}$, let $\ell$ and $A$  be represented with respect to the basis  $\{e_{s_1},\ldots,e_1, e_{2s_1},\ldots,e_{s_1+1}\}$ by the matrices
\[
H=
\left(\begin{array}{cc}
H_{1,1} &H_{1,2}\\
H_{2,1}&H_{2,2}
\end{array}
\right)
\quad\mbox{ and }\quad
C=
\left(\begin{array}{cc}
0&J_{\lambda^2,s_1}\\
I_{s_1}&0
\end{array}
\right)
\]
where the each $H_{i,j}$  is an $s_1\times s_1$ matrix. The matrices $H_{i,j}$ are Hankel because $A^{2}-\lambda^2I$ is $H$-self-adjoint. That is, $H_{1,1}$ is Hankel because
\begin{align*}
\ell(e_i,e_{j})=\ell\left((A^2-\lambda^2)^{i-1} e_1,(A^2-\lambda^2)^{j-1} e_1\right)&=\ell\left(e_1,(A^2-\lambda^2)^{i+j-2} e_1\right)\\
&=\ell(e_1,e_{i+j-1})\quad\quad\quad\quad\quad\quad\forall  i+j\leq s_1+1
\end{align*}
and
\begin{align*}
\ell(e_i,e_{j})=\ell\left((A^2-\lambda^2)^{i-1} e_1,(A^2-\lambda^2)^{j-1} e_1\right)&=\ell\left(e_1,(A^2-\lambda^2)^{i+j-2} e_1\right)\\
&=\ell\left(e_1,0\right)\quad\quad\quad\quad\quad\quad\forall  i,j\leq s_1\mbox{ with }i+j> s_1+1.
\end{align*}
Similarly, using the identity $\left((A^2-\lambda^2 I) v,w\right)=\left( v,(A^2-\lambda^2 I)w\right)$, we can show $H_{1,2}$, $H_{2,1}$, and $H_{2,2}$ are Hankel.

Since $A$ is $\ell$-self-adjoint, $HC$ is symmetric, which, as in Lemma \ref{zero diagonals in H lemma}, implies that the $(i,j)$ entry of $H_{2,1}$ is 0 for all $i+j<s_1+1$.  On the other hand, if $s_1+1<i+j$ then still the $(i,j)$ entry of $H_{2,1}$ is 0 because
\[
\ell\left((A^2-\lambda^2)^{i-1}e_1,(A^2-\lambda^2)^{j-1}Ae_1\right)=\ell\left((A^2-\lambda^2)^{i+j-2}e_1,Ae_1\right)=\ell\left(0,Ae_1\right)=0.
\]
Therefore,
\[
H_{1,2}=H_{2,1}=\ell\left(e_1,e_{2s_1}\right)S_{s_1}.
\]
The same analysis shows that the lower left and upper right $s_1\times s_1$ blocks of the matrix representing $\ell$ with respect to the basis $\{a_{s_1},\ldots, a_{1},b_{s_1},\ldots, b_{1}\}$ are also multiples of $S_{s_1}$, that is,
\[
\ell(a_i,b_j)=\ell(a_1,b_{s_1})\delta_{i+j,s_1+1}.
\]
Direct computation also shows that $H_{1,1}$ is a multiple of $S_{s_1}$, that is,
\[
H_{1,1}=\ell(e_1,e_{s_1})S_{s_1},
\]
where 
\begin{align}\label{neg e-val product b}
\ell(e_1,e_{s_1})&=|\alpha|^2\ell(a_1,a_{s_1})+\alpha\overline{\beta}\ell(a_1,b_{s_1})+\beta\overline{\alpha}\ell(b_1,a_{s_1})+|\beta|^2\ell(b_1,b_{s_1}) \nonumber\\
&=\left(|\alpha|^2+\lambda^2|\beta|^2\right)\ell(a_1,a_{s_1})+\alpha\overline{\beta}\ell(a_1,b_{s_1})+\overline{\alpha\overline{\beta}\ell(a_1,b_{s_1})} \nonumber\\
&=\pm \left(|\alpha|^2+\lambda^2|\beta|^2\right)+\alpha\overline{\beta}\ell(a_1,b_{s_1})+\overline{\alpha\overline{\beta}\ell(a_1,b_{s_1})}\nonumber\\
&=\pm \left(|\alpha|^2+\lambda^2|\beta|^2\right).
\end{align}
Since $HC$ is symmetric, it follows that 
\begin{align}
H_{2,2}=\Bigl(H_{1,1}J_{\lambda^2,s_1}\Bigr)^T&=\ell(e_1,e_{s_1})S_{s_1}J_{\lambda^2,s_1}.\nonumber
\end{align}
Lastly, fixing $\alpha=1$ and $\beta=0$, the matrices 
\[
H=
\pm\left(\begin{array}{cc}
S_{s_1} & 0\\
0 &S_{s_1}J_{\lambda^2,s_1}
\end{array}
\right)
\quad\mbox{ and }\quad
C=
\left(\begin{array}{cc}
0&J_{\lambda^2,s_1}\\
I_{s_1}&0
\end{array}
\right)
\]
represent the restrictions of $\ell$ and $A$ on $\text{span}_\C\{e_i\}_{1\leq i\leq 2s_1}$ with respect to a permutation of the basis $\{e_i\}_{1\leq i\leq 2s_1}$.
Since $H$ is nonsingular, By Lemma \ref{ell-orthogonal complement}, we can repeat this construction on the $\ell$-orthogonal complement of $\text{span}_{\C}\{e_1,\ldots, e_{2s_1}\}$, and hence there exists a basis of  $W_\lambda^{(n)}$ with respect to which $\ell$ and $A$ are represented by  the matrices
\begin{align}\label{negative e-val canonical form b}
\bigoplus_{i=1}^{n_\lambda}\left(\bigoplus_{j=1}^{r_i}
 \epsilon_{i,j}
 \left(\begin{array}{cc}
S_{s_i} & 0\\
0 &S_{s_i}J_{\lambda^2,s_i}
\end{array}
\right)
\right)
\quad\mbox{ and }\quad 
\bigoplus_{i=1}^{n_\lambda}\left(\bigoplus_{j=1}^{r_i}
\left(\begin{array}{cc}
0&J_{\lambda^2,s_i}\\
I_{s_i}&0
\end{array}
\right)
\right)
\end{align}
where $\epsilon_{i,j}=\pm1$. In particular, we have shown that if there is a basis with respect to which $\ell$ and  $A$ are represented by 
\begin{align}\label{negative e-val canonical form c}
\bigoplus_{i=1}^{n_\lambda}\left(\bigoplus_{j=1}^{r_i}
 \epsilon_{i,j}
S_{2s_i}
\right)
\quad\mbox{ and }\quad 
\bigoplus_{i=1}^{n_\lambda}\left(\bigoplus_{j=1}^{r_i}
\left(\begin{array}{cc}
0&J_{\lambda^2,s_i}\\
I_{s_i}&0
\end{array}
\right)
\right)
\end{align}
then there is a basis with respect to which $\ell$ and $A$ are represented by the matrices in \eqref{negative e-val canonical form b}, and hence, noting \eqref{transformation}, there exist a matrix $M$ such that 
\begin{align}
(M^{-1})^*\bigoplus_{i=1}^{n_\lambda}\left(\bigoplus_{j=1}^{r_i}
\epsilon_{i,j}
\left(\begin{array}{cc}
S_{s_i} & 0\\
0 &S_{s_i}J_{\lambda^2,s_i}
\end{array}
\right)
\right)M^{-1}=\bigoplus_{i=1}^{n_\lambda}\left(\bigoplus_{j=1}^{r_i}
\epsilon_{i,j}
H_{\lambda,{s_i}}
\right)
\end{align}
and
\begin{align}
M\bigoplus_{i=1}^{n_\lambda}\left(\bigoplus_{j=1}^{r_i}
\left(\begin{array}{cc}
0&J_{\lambda^2,s_i}\\
I_{s_i}&0
\end{array}
\right)
\right)
\overline{M}^{-1}=
\bigoplus_{i=1}^{n_\lambda}\left(\bigoplus_{j=1}^{r_i}
C_{\lambda,s_i}
\right),
\end{align}
which completes the proof.
\end{proof}

\subsection{Treating Generalized Eigenspaces with Nonreal Eigenvalues}\label{noreal eigenvalue subsection}
Throughout this subsection we assume $\lambda^2\not\in\R$ and that the restriction of $A^2$ to $W_\lambda^{(n)}$ has a Jordan canonical form with $2r_i$ Jordan blocks of size $s_i\times s_i$, where $r_1,\ldots,2r_{n_\lambda}$ and $s_1,\ldots,s_{n_\lambda}$ are positive integers satisfying $s_i>s_{i+1}$.

\begin{proposition}\label{nonreal e-val canonical form prop}
There is a basis of $W_\lambda^{(n)}$  with respect to which the restrictions of $\ell$ and $A$ to $W_\lambda^{(n)}$ are represented by the matrices
\begin{align}\label{nonreal e-val canonical form}
\bigoplus_{i=1}^{n_\lambda}\left(\bigoplus_{j=1}^{r_i}\epsilon_{i,j}H_{\lambda,s_i}\right)\quad\mbox{ and }\quad \bigoplus_{i=1}^{n_\lambda}\left(\bigoplus_{j=1}^{r_i}C_{\lambda,s_i}\right)\quad\mbox{ where }\epsilon_{i,j}=\pm1
\end{align}
respectively.
\end{proposition}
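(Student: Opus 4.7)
The argument parallels that of Proposition~\ref{negative e-val canonical form prop}, with the essential change that $\lambda^2 \neq \overline{\lambda^2}$ forces the subspaces $V_+ := \ker(A^2 - \lambda^2 I)^n$ and $V_- := \ker(A^2 - \overline{\lambda^2} I)^n$ to be transverse, so $W_\lambda^{(n)} = V_+ \oplus V_-$ and $\ell$ pairs $V_+$ nondegenerately with $V_-$ while vanishing on $V_+ \times V_+$ and on $V_- \times V_-$ (a standard consequence of the $\ell$-self-adjointness of the linear operator $A^2$ together with the distinctness of the eigenvalues $\lambda^2$ and $\overline{\lambda^2}$). Since $A(A^2 - \lambda^2 I) = (A^2 - \overline{\lambda^2} I) A$, the antilinear operator $A$ swaps $V_+$ and $V_-$ bijectively. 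The plan is to produce an $A$-invariant subspace $V \subset W_\lambda^{(n)}$ of dimension $2s_1$ on which $\ell$ and $A$ are represented by $\pm H_{\lambda, s_1}$ and $C_{\lambda, s_1}$, and then iterate on the $\ell$-orthogonal complement via Lemma~\ref{ell-orthogonal complement}.

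Starting from a top Jordan vector $a_1 \in V_+$ of a chain of maximal length $s_1$, I set $a_{j+1} := (A^2 - \lambda^2 I) a_j$ and $b_j := A a_j$, obtaining the $A$-invariant subspace $V := \mathrm{span}_{\C}\{a_1, \ldots, a_{s_1}, b_1, \ldots, b_{s_1}\}$ of dimension $2s_1$. The Gram matrix of $\ell|_V$ in the basis $\{a_i, b_j\}$ is block-off-diagonal with $s_1 \times s_1$ block $M_{ij} := \ell(a_i, b_j)$. The $\ell$-adjoint identity $\ell(v, (A^2 - \overline{\lambda^2} I) w) = \ell((A^2 - \lambda^2 I) v, w)$ yields $M_{ij} = \ell(a_{i+j-1}, b_1)$ for $i + j \leq s_1 + 1$ and $M_{ij} = 0$ otherwise, so $M$ is Hankel and the nondegeneracy of $\ell|_V$ reduces to showing that $a_1$ can be chosen so that $\ell(a_1, A(A^2 - \lambda^2 I)^{s_1-1} a_1) \neq 0$.

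This last condition is the crux of the argument. I introduce the $\C$-bilinear form $B(v, w) := \ell(v, A w)$ on $V_+$: the antilinearity of $A$ cancels the conjugate-linearity of $\ell$ to make $B$ complex bilinear, the $\ell$-self-adjointness of $A$ makes $B$ symmetric, and $B$ is nondegenerate because $\ell$ pairs $V_+$ nondegenerately with $V_-$ and $A$ maps $V_+$ bijectively onto $V_-$. Moreover $A^2|_{V_+}$ is $B$-self-adjoint, hence so is $(A^2 - \lambda^2 I)^{s_1-1}|_{V_+}$; since $s_1$ is the maximal Jordan block size this operator is nonzero, and combined with the nondegeneracy of $B$ this forces the symmetric bilinear form $\psi(v, w) := B(v, (A^2 - \lambda^2 I)^{s_1-1} w)$ to be not identically zero. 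By polarization, its associated quadratic form then produces $a_1 \in V_+$ with $\psi(a_1, a_1) = \ell(a_1, A(A^2 - \lambda^2 I)^{s_1-1} a_1) \neq 0$.

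Once such $a_1$ is fixed, a Gram-Schmidt-style triangular change $a_1 \mapsto a_1 + \sum_{k \geq 2} \gamma_k a_k$, which automatically propagates to update $b_j = A a_j$, annihilates $\ell(a_1, b_k)$ for $k < s_1$ in the manner of Lemma~\ref{normalizing ell for lambda>0}, and a final complex scaling normalizes $\ell(a_1, b_{s_1})$ to $\pm 1$. A direct computation then shows that in the reordered basis $(a_{s_1}, \ldots, a_1, b_{s_1}, \ldots, b_1)$ the matrices of $\ell|_V$ and $A|_V$ are precisely $\pm H_{\lambda, s_1} = \pm S_{2s_1}$ and $C_{\lambda, s_1}$. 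Lemma~\ref{ell-orthogonal complement} then provides an $A$-invariant $\ell$-orthogonal complement carrying the remaining Jordan structure of $A^2$, and iterating the construction $\sum_i r_i$ times exhausts $W_\lambda^{(n)}$.
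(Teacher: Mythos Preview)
Your proof is correct and takes a somewhat different route from the paper's. The paper begins by invoking the Gohberg--Lancaster--Rodman form for $(\ell, A^2)$ on $W_\lambda^{(n)}$ to obtain a specific starting pair $a_1, a_1'$ with $\ell(a_i, a_j') = \delta_{i+j,\, s_1+1}$, and then searches within the two-parameter family $e_1 = \alpha a_1 + \beta (A a_1')$ for a vector satisfying $\ell\bigl(e_1, A(A^2-\lambda^2 I)^{s_1-1} e_1\bigr) \neq 0$; because the GLR normalization has already been imposed, the resulting off-diagonal Gram block comes out directly as a scalar multiple of $S_{s_1}$ with no further adjustment needed. By contrast, you work intrinsically: you introduce the symmetric $\C$-bilinear form $B(v,w) = \ell(v, Aw)$ on $V_+$ (this is exactly the form $\ell'$ of \eqref{symmdef} restricted to $V_+$), observe that $A^2|_{V_+}$ is $B$-self-adjoint, and use polarization of the nonzero symmetric form $\psi = B\bigl(\,\cdot\,, (A^2-\lambda^2 I)^{s_1-1}\,\cdot\,\bigr)$ to locate a non-isotropic $a_1$ directly. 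The price is the explicit triangular Gram--Schmidt step afterward, which goes through because $B$ is genuinely bilinear (the recursion for the coefficients $\gamma_k$ involves no complex conjugates). Your argument is more self-contained in that it does not rely on the GLR form, and it makes transparent why the nonreal case is the simplest: the pair $(\ell, A)$ on $W_\lambda^{(n)}$ is completely encoded by the pair $(B, A^2|_{V_+})$ of a nondegenerate symmetric bilinear form and a $B$-self-adjoint linear operator on half the space, whose canonical form over $\C$ is classical. One small remark: since $B$ is $\C$-bilinear, your final scaling can always achieve $\epsilon_{i,j} = +1$ here; the $\pm$ in the statement is present only for uniformity with the other cases.
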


\begin{proof}
Given the Gohberg--Lancaster--Rodman canonical form for $\ell$ and $A^2$ summarized in Remark \ref{Gohberg--Lancaster--Rodman}, there exist  vectors $a_1,a^\prime_1\in W_\lambda^{(n)}$  such that the restrictions of $\ell$ and $A^2$ to the $2s_1$-dimensional vector space $\text{span}_\C\{a_1,(A^2-\lambda^2I)a_1,\ldots, (A^2-\lambda^2I)^{s_1-1}a_1,a^\prime_1,\ldots, (A^2-\overline{\lambda}^2I)^{s_1-1}a^\prime_1\}$ are represented respectively by $S_{2s_1}$ and $J_{\lambda^2,s_1}\oplus J_{\overline{\lambda}^2,s_1}$ with respect to the basis $\{ (A^2-\lambda^2I)^{s_1-1}a_1, \ldots,a_1, (A^2-\overline{\lambda}^2I)^{s_1-1}a^\prime_1,\ldots,a^\prime_1\}$. Define
\[
a_{k+1}=(A^2-\lambda^2I)a_k\quad\mbox{ and }\quad a^\prime_{k+1}=\left(A^2-\overline{\lambda}^2I\right)a^\prime_k.
\]
Our goal is to show that there exists a choice of vector $a_1$ such that $\text{span}_\C\{a_1,(A^2-\lambda^2I)a_1,\ldots, (A^2-\lambda^2I)^{s_1-1}a_1,a^\prime_1,\ldots, (A^2-\overline{\lambda}^2I)^{s_1-1}a^\prime_1\}$ is $A$-invariant, so let us proceed assuming otherwise and find a new choice for $a_1$ that satisfies this property.

Define
\[
b_k:=Aa^\prime_k\quad\mbox{ and }\quad b^\prime_k:=Aa_k.
\]
For $1\leq i,j\leq s_1$,
\[
\ell(b_i,b_j)=\ell\left(\overline{\lambda}^2a^\prime_j+a^\prime_{j+1},a^\prime_i\right)=0\quad\mbox{ and }\quad \ell(b^\prime_i,b^\prime_j)=\ell(\lambda^2a_j+a_{j+1},a_i)=0,
\]
and
\[
\ell(b_i,b^\prime_j)=\ell\left(\lambda^2a_j+a_{j+1},a^\prime_i\right) \quad\mbox{ and }\quad  \ell(b^\prime_i,b_j)=\ell\left(\overline{\lambda}^2a^\prime_j+a^\prime_{j+1},a_i\right).
\]
Therefore, the restrictions of $\ell$ and $A^2$ to the $2s_1$-dimensional vector space $\text{span}_\C\{b_1,\ldots, b_{s_1},b^\prime_1,\ldots, b^\prime_{s_1}\}$ are represented respectively by
\[
\left(
\begin{array}{cc}
0 & J_{\lambda^2,s_1}S_{s_1}\\
J_{\overline{\lambda}^2,s_1}S_{s_1} & 0
\end{array}
\right)
\quad\mbox{ and }\quad J_{\lambda^2,s_1}\oplus J_{\overline{\lambda}^2,s_1}
\]
with respect to the basis $\text{span}_\C\{b_1,\ldots, b_{s_1},b^\prime_1,\ldots, b^\prime_{s_1}\}$.

Letting $e_1=\alpha a_1+ \beta b_1$, we have
\begin{align}\label{neg e-val product a}
\ell\left(e_1,A(A^2-\lambda^2I)^{s_1-1}e_1\right)&=\alpha^2\ell(a_1,b^\prime_{s_1})+\alpha\beta\big(\ell(a_1,A^2a^\prime_{s_1})+\ell(b_1,b^\prime_{s_1})\big)+\beta^2\ell(b_1,A^2a^\prime_{s_1})\nonumber\\
&=\alpha^2\ell(a_1,b^\prime_{s_1})+\alpha\beta\big(\lambda^2\ell(a_1,a^\prime_{s_1})+ \overline{\lambda}^2\ell(a^\prime_{s_1},a_1)\big)+ {\lambda}^2\beta^2\ell(b_1,a^\prime_{s_1})\nonumber\\
&=\alpha^2\ell(a_1,b^\prime_{s_1})+\alpha\beta \left(\lambda^2 + \overline{\lambda}^2 \right)+ {\lambda}^2\beta^2\ell(b_1,a^\prime_{s_1}).
\end{align}
Define
\[
e_k=(A^2-\lambda^2)^{k-1}e_1\quad\mbox{ and }\quad e_{s_1+k}=Ae_k\quad\quad\forall 1\leq k\leq s_1
\]
and, on the span of $\{e_i\}$, let $\ell$ and $A$  be represented with respect to the basis  $\{e_i\}$ by the matrix
\[
H=
\left(\begin{array}{cc}
H_{1,1} &H_{1,2}\\
H_{2,1}&H_{2,2}
\end{array}
\right)
\quad\mbox{ and }\quad
C=
\left(\begin{array}{cc}
0&J_{\lambda^2,s_1}\\
I_{s_1}&0
\end{array}
\right)
\]
where the matrices $H_{i,j}$ are each $s_1\times s_1$. Direct computation yields $H_{1,1}=H_{2,2}=0$. Furthermore, $H_{1,2}$ is symmetric because $HC$ is symmetric, and hence $H_{2,1}$  is symmetric as well. If $s_1+1<i+j$ then the $(i,j)$th entry of $H_{1,2}$ is zero because
\[
\ell\left((A^2-\lambda^2)^{i-1}e_1,(A^2-\overline{\lambda}^2)^{j-1}Ae_1\right)=\ell\left((A^2-\lambda^2)^{i+j-2}e_1,Ae_1\right)=\ell\left(0,Ae_1\right)=0.
\]
Accordingly,
\[
\det(H)=\pm\big(|\ell(e_1,e_{2s_1})|\big)^{2s_1},
\]
which, by \eqref{neg e-val product a}, can be made nonzero for an adequate choice of $\alpha$ and $\beta$. Since $HC$ is symmetric, evaluating the lower right $s_1\times s_1$ block of $HC$ yields, as in Lemma \ref{zero diagonals in H lemma}, that 
\[
H_{2,1}=\overline{H_{1,2}}=\ell(e_1,e_{2s_1})S_{s_1}.
\]
By replacing $e_1$ with $\sqrt{\frac{1}{\ell(e_1,e_{2s_1})}}e_1$, we can assume $\ell(e_1,e_{2s_1})=\pm1$, so the restrictions of $\ell$ and $A$ to $\text{span}_\C\{e_1,\ldots,e_{2s_1}\}$ are represented by the matrices 
\[
\pm S_{2s_1}
\quad\mbox{ and }\quad 
\left(\begin{array}{cc}
0&J_{\lambda^2,s_1}\\
I_{s_1}&0
\end{array}
\right)
\]
with respect to the basis $\{e_1,\ldots,e_{2s_1}\}$. By Lemma \ref{ell-orthogonal complement}, we can repeat this normalization proceedure on the $\ell$-orthogonal complement of $\text{span}_\C\{e_1,\ldots,e_{2s_1}\}$, and hence there is a basis of $W_{\lambda}^{(n)}$ with respect to which $\ell$ and $A$ are represented by the matrices 
\begin{align}\label{nonreal e-val canonical form b}
\bigoplus_{i=1}^{n_\lambda}\left(\bigoplus_{j=1}^{r_i}
\epsilon_{i,j}S_{2s_i}
\right)
\quad\mbox{ and }\quad 
\bigoplus_{i=1}^{n_\lambda}\left(\bigoplus_{j=1}^{r_i}
\left(\begin{array}{cc}
0&J_{\lambda^2,s_i}\\
I_{s_i}&0
\end{array}
\right)
\right)
\quad\quad\mbox{where $\epsilon_{i,j}=\pm1$.}
\end{align}
\end{proof}

\subsection{A Canonical Form for Antilinear Operators}\label{A canonical form for antilinear operators} It is worth noting that methods applied above can be used to obtain the canonical form for antilinear operators (without considering Hermitian forms) given in \cite[Theorem 3.1]{hong1988canonical}, referred to in Remark \ref{Hong--Horn}, so here we briefly outline how this is done.

On a generalized eigenspace $W_{\lambda}^{(n)}$ for which $\lambda\not\in\R$, in subsections \ref{negative eigenvalue subsection} and \ref{noreal eigenvalue subsection} we normalize the restriction $A|_V$ of $A$ to a subspace $V$, where $V$ is defined to be the space spanned by some Jordan chain of $A^2$ and the image of $A$ applied to this Jordan chain, and achieve the normalization by first choosing a basis with respect to which $A^2$ has the Jordan normal form and then transforming this basis to a new one with respect to which $A$ has the form in Theorem \ref{simultaneous canonical form theorem}, all the while taking care to simultaneously normalize $\ell$. The very same procedure can be applied to normalize $A|_V$ without the additional steps needed to normalize $\ell$, that is, one can normalize $A|_V$ by reading through the proofs of propositions \eqref{negative e-val canonical form prop} and \eqref{nonreal e-val canonical form prop} while disregarding all mention of $\ell$ (e.g., using the Jordan normal form rather than the Gohberg--Lancaster--Rodman form). Next, letting $U$ denote the $A$-invariant space on which we have already normalized $A$, we repeat this normalization on any $A$-invariant subspace of $W_{\lambda}^{(n)}\setminus U$ containing a maximal Jordan chain of $A^2$ rather than applying Lemma \ref{ell-orthogonal complement} to choose a specific $A$-invariant complement of $U$. To find such a subspace, we choose any maximal length Jordan chain of $A^2$ in $W_{\lambda}^{(n)}\setminus U$ and consider the subspace spanned by this chain and the image of $A$ applied to this chain.

On the generalized eigenspace $W_{0}^{(n)}$, we may normalize the restriction $A|_V$ of $A$ to a subspace $V$, where $V$ is a maximal subspace of $W_{0}^{(n)}$ that has a basis obtained by applying powers of $A$ to a single vector, by using the procedure in the proof of Proposition \ref{zero e-val canonical form prop}, again disregarding all mention of $\ell$, that is, rather than choosing $a_1\in\{v\,|\, A^{k-1}v\neq0\}\cap W_{0}^{(n)}$ such that \eqref{top vector choice zero e-space} holds we simply choose $a_1$ to be an arbitrary vector in $\{v\,|\, A^{k-1}v\neq0\}\cap W_{0}^{(n)}$. We repeat this normalization on any maximal $A$-invariant subspace of $W_{0}^{(n)}\setminus U$ (where $U$ denotes the $A$-invariant  space on which we have already normalized $A$) that has a basis obtained by applying powers of $A$ to a single vector. To find such a subspace, we choose any vector $v\in W_{0}^{(n)}\setminus U$ for which the subspace spanned by $\{v,Av,\ldots,A^nv\}$ has maximal dimension.

Lastly, on a generalized eigenspace $W_{\lambda}^{(n)}$ for which $\lambda^2>0$, we apply Lemma \ref{reducing the general positive eigenvalue case} to normalize the restriction $A|_V$ of $A$ to a subspace $V$, where $V$ is the span of a Jordan chain of $A^2$ given by Lemma \ref{reducing the general positive eigenvalue case}. Note, the proof of Lemma \ref{reducing the general positive eigenvalue case} does not use the assumption that $A$ is $\ell$-self-adjoint for some Hermitian form $\ell$. And as in the previous two cases, we repeat this normalization on any $A$-invariant  subspace of $W_{\lambda}^{(n)}\setminus  U$ (where, again, $U$ denotes the $A$-invariant space on which we have already normalized $A$) containing a maximal length Jordan chain of $A^2$.

Given that every antilinear operator can be represented by a matrix representing the antilinear operator of a pair in the canonical form of Theorem \ref{simultaneous canonical form theorem}, we have the following lemma.
\begin{lemma}\label{every operator is self-adjoint}
Every antilinear operator on $\C^n$ is $\ell$-self-adjoint with respect to some nondegenerate Hermitian form $\ell$.
\end{lemma}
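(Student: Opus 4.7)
The plan is to upgrade the canonical form for a single antilinear operator (Remark \ref{Hong--Horn}, reproved in the preceding subsection) into a compatible pair by producing $\ell$ block by block. By that canonical form, there exists a basis of $\C^n$ with respect to which $A$ is represented by a block-diagonal matrix
\[
C=\bigoplus_{\alpha} C_{\lambda_\alpha,k_\alpha},
\]
where $\lambda_\alpha\in\{\lambda_1,\ldots,\lambda_\gamma\}$ and $k_\alpha\in\N$. The idea is to define the Hermitian form $\ell$ so that, in this very basis, it is represented by the matching block-diagonal matrix $H=\bigoplus_\alpha H_{\lambda_\alpha,k_\alpha}$ from Theorem \ref{simultaneous canonical form theorem}.

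First I would record that each $H_{\lambda,k}$ (being $S_k$ or $S_{2k}$) is Hermitian with determinant $\pm1$, so $H$ is Hermitian and $\det H\neq 0$; hence $\ell$ is a nondegenerate Hermitian form. Next, the $\ell$-self-adjointness condition $(HC)^T=HC$ from \eqref{cond} is block-diagonal: it reduces to verifying
\[
\bigl(H_{\lambda,k}\,C_{\lambda,k}\bigr)^T=H_{\lambda,k}\,C_{\lambda,k}\qquad\forall(\lambda,k).
\]
This symmetry is precisely the statement established for each block in Propositions \ref{real e-val canonical form prop}, \ref{zero e-val canonical form prop}, \ref{negative e-val canonical form prop}, and \ref{nonreal e-val canonical form prop}, where the pair $(H_{\lambda,k},C_{\lambda,k})$ is exhibited as a matrix representation of an $\ell$-self-adjoint antilinear operator and a nondegenerate Hermitian form. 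Alternatively, one can check it by a direct computation: for $\lambda\in\R$, the product $S_k J_{\lambda,k}$ is symmetric because $S_k$ is the reversal matrix and conjugation by $S_k$ sends upper triangular Toeplitz matrices to lower triangular ones; for $\lambda\notin\R$, one expands $S_{2k}C_{\lambda,k}$ as a $2\times 2$ block matrix and reads off that its off-diagonal blocks are $S_k$ and $S_k J_{\lambda^2,k}$, which again gives a symmetric matrix by the real-$\lambda$ case applied to $J_{\lambda^2,k}$.

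Putting the blocks together, the pair $(H,C)$ satisfies \eqref{cond} as a whole, so the form $\ell$ represented by $H$ in our chosen basis is a nondegenerate Hermitian form for which $A$ is $\ell$-self-adjoint, establishing the lemma. No serious obstacle is expected: the only nontrivial content is the per-block symmetry identity, and that has effectively been verified in the case-by-case analysis of Section \ref{antilinear operators special cases}, so the argument amounts to assembling the building blocks produced there.
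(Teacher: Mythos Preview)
Your proposal is correct and follows exactly the paper's approach: the paper deduces the lemma from the sentence immediately preceding it, namely that every antilinear operator admits a block-diagonal representation $\bigoplus_\alpha C_{\lambda_\alpha,k_\alpha}$, whence pairing with $\bigoplus_\alpha H_{\lambda_\alpha,k_\alpha}$ yields the desired $\ell$. One small slip in your optional direct check: the product $S_{2k}C_{\lambda,k}$ is block \emph{diagonal} with blocks $S_k$ and $S_kJ_{\lambda^2,k}$ (both symmetric), not block off-diagonal, but this does not affect your main argument, which already cites Propositions \ref{real e-val canonical form prop}--\ref{nonreal e-val canonical form prop}.
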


\section{Alternative Canonical Forms}\label{alter_section}

We conclude this text with a few remarks regarding an alternative approach to deriving a canonical form for the pair $(\ell, A)$, and we record an alternative canonical form, Theorem \ref{alt simultaneous canonical form theorem}, that naturally arises from this approach. The form in Theorem \ref{simultaneous canonical form theorem} has some advantages. Its matrices have a minimal number of nonzero entries, for example. The form in Theorem \ref{alt simultaneous canonical form theorem} is, however, better suited for certain applications. Namely, analysis involving antilinear operators often includes consideration of the operators' squares, making use of the squares' linearity and  well developed theory for linear operators. The alternative canonical forms of Theorems \ref{alt simultaneous canonical form theorem} and \ref{canonical form theorem} below are ideal for studying $A$ and $A^2$ simultaneously because $A^2$ is represented by a Jordan matrix whenever $A$ is represented by the canonical form of Theorem \ref{canonical form theorem}.

When searching for a canonical form for $(\ell, A)$, after noticing that a linear operator $A^2$ is $\ell$-self-adjoint whenever the antilinear operator $A$ is $\ell$-self-adjoint, it becomes natural to apply the Gohberg--Lancaster--Rodman form to the pair $(\ell, A^2)$. Specifically, one may try to normalize $(\ell, A)$ by bringing $(\ell, A^2)$ to the Gohberg--Lancaster--Rodman form and then changing the basis to normalize $A$ while tracking the changes induced in the matrix representing $\ell$ (ideally, one would like to achieve this without changing the matrix representing $\ell$ at all). Indeed, we use this approach in subsections  \ref{negative eigenvalue subsection} and \ref{noreal eigenvalue subsection}, and, from this perspective, noting Lemma \ref{GLR invariant lemma}, one must wonder why we do not use this approach in section \ref{positive eigenvalue subsection} as well. It turns out to be absolutely viable for the normalization carried out in section \ref{positive eigenvalue subsection}, but the method presented in section \ref{positive eigenvalue subsection} is simply more efficient. Applying this alternative approach to carry out the normalization has its own merit, however, because it naturally leads one to discover the canonical form given in Theorem \ref{alt simultaneous canonical form theorem} below. 

To explore this further, let us consider the special case wherein $A^2:\C^n\to\C^n$ has a single eigenvalue $\lambda^2$, its only eigenspace is 1-dimensional, and $\lambda^2>0$  (note, applying Lemmas \ref{ell-orthogonal complement} and \ref{reducing the general positive eigenvalue case}, one can always reduce to this special case for the normalization carried out in section \ref{positive eigenvalue subsection}). Applying the Gohberg--Lancaster--Rodman form to the pair $(\ell, A^2)$, we can choose a basis of $\C^n$ with respect to which $\ell$ and $A$ are represented by matrices $S_n$ and $C$ respectively such that
\begin{align}\label{form of CBarC}
C\overline{C}=J_{\lambda^2,n}.
\end{align}
We attempt to normalize $A$ by changing the basis with transformations that preserve the matrix representations of $\ell$ and $A^2$. Hence we consider the transformations represented by matrices in the group
\begin{align}\label{symmetry group preserving GLR}
G:=\left\{M\in M_{n\times n}(\C)\,|\, M^*S_{n}M=S_{n}\mbox{ and }MC\overline{C}=C\overline{C}M\right\}
\end{align}
acting on the subspace
\[
\mathcal{C}:=\{C\in M_{n\times n}(\C)\,|\,C\overline{C}=J_{\lambda^2,n}\}
\]
of $GL_{n}(\C)$, via the action $(M,C)\mapsto M C \overline{M}^{-1}$. It turns out that we can solve \eqref{form of CBarC}, that is, we can completely describe the general form of a matrix $C$ satisfying \eqref{form of CBarC}, and $G$ acts transitively on $\mathcal{C}$.\footnote{The space $\mathcal{C}$ turns out to be homeomorphic to the Cartesian product $S^1\times \R^{n-1}$ of a circle and Euclidean space with the product topology.} Matrices in $\eqref{form of CBarC}$ turn out to be upper-triangular and Toeplitz, and, for a matrix $C\in \mathcal{C}$, one can explicitly construct a matrix $M\in G$ such that $M {C} \overline{M}^{-1}\in GL_n(\R)$ and the eigenvalue of $M {C} \overline{M}^{-1}$ is $|\lambda|$.  Choosing $M$ to satisfy these conditions, it turns out that $M {C} \overline{M}^{-1}$ equals the matrix $M_{|\lambda|,n}$ defined below, which confirms that $G$ acts transitively on $\mathcal{C}$. Of course, we have omitted details of the calculations summarized here, but the summary provides an outline of how one can apply the aforementioned alternative approach to the normalization carried out in section \ref{positive eigenvalue subsection}. Furthermore, this summary illustrates how, from one perspective, the alternative canonical form given in Theorem \ref{alt simultaneous canonical form theorem} below arises naturally.

This alternative form features the sequence 
\begin{align}\label{catalan-like sequence}
c_0(\lambda):=\lambda,\quad c_1(\lambda):=\frac{1}{2\lambda},\quad\mbox{ and }\quad c_i(\lambda):=\frac{-1}{2\lambda}\sum_{j=1}^{i-1}c_j(\lambda)c_{i-j}(\lambda),
\end{align}
which arises if we try to solve the matrix equation
\begin{align}\label{square root of Jordan matrix a}
C^2=J_{\lambda^2,k}\quad\quad \lambda\neq 0 
\end{align}
by supposing $C$ has the form 
\begin{align}\label{square root of Jordan matrix b}
C= \sum_{i=1}^{k}c_{i-1}(\lambda)T_k^{i-1}
\end{align}
and comparing coefficients, interpreting each side of the equation as a degree $k-1$ polynomial in $T_k$.\footnote{If $C$ satisfies \eqref{square root of Jordan matrix a} then $C$ satisfies \eqref{square root of Jordan matrix b} for some choice of coefficients $c_i(\lambda)$. Nevertheless, proving this fact is not necessary for understanding the provenance of \eqref{catalan-like sequence}, so we introduce \eqref{square root of Jordan matrix b} as though it is not a consequence of \eqref{square root of Jordan matrix a}.}
An interesting observation is that  the sequence
\[
|c_1(1/2)|=1,|c_2(1/2)|=1,|c_3(1/2)|=2,\ldots
\]
is known as the Catalan numbers, $|c_i(1/2)|= \frac{1}{i+1}\genfrac{(}{)}{0pt}{1}{2i}{i}$, which play an important role in combinatorics. The identity
\[
c_{i}(\lambda)=(-1)^i(2\lambda)^{1-2i}|c_i(1/2)|=\frac{(-1)^{i+1}(2\lambda)^{1-2i}}{i+1}\genfrac{(}{)}{0pt}{0}{2i}{i},
\]
valid for all positive integers $i$, further illuminates the relationship between $\{c_{i}(\lambda)\}_{i=1}^\infty$ and the Catalan numbers.

For $\lambda\in\C$, we define the $k\times k$ or $2k\times 2k$  matrix $M_{\lambda,k}$ by
\[
M_{\lambda,k}:= 
\begin{cases}
\sum_{i=1}^{k}c_{i-1}(\lambda)T_k^{i-1}&\mbox{ if } \lambda\in\R\setminus\{0\}\\
\frac{1}{2}\left(
\begin{array}{cc}
J_{1,\frac{k}{2}}& -J_{-1,\frac{k}{2}}\\
J_{-1,\frac{k}{2}}& -J_{1,\frac{k}{2}}
\end{array}
\right)
&\mbox{ if }\lambda=0\mbox{ and $k$ is even}\\
\left(
\begin{array}{c|c|c}
0&0&  I_{ \frac{k-1}{2}}\\\hline
 I_{ \frac{k-1}{2}}&0&0\\\hline
 0&0 &0
\end{array}
\right)
&\mbox{ if }\lambda=0\mbox{ and $k$ is odd}\\
\left(
\begin{array}{cc}
0&  \sum_{i=1}^{k}c_{i-1}(\lambda)T_k^{i-1}\\
\sum_{i=1}^{k}c_{i-1}(\overline{\lambda})T_k^{i-1}& 0
\end{array}
\right)
&\mbox{ otherwise},
\end{cases}
\]
where  $0$ denotes a matrix of appropriate size with zero in all entries and, for odd $k$, $M_{0,k}$ is a $k\times k$ matrix. 
We define corresponding matrices $N_{\lambda, k}$ by
\[
N_{\lambda,k}:= 
\begin{cases}
S_{k}&\mbox{ if } \lambda\in\R\setminus\{0\}\\
S_{ \frac{k}{2}}  \oplus \left(-S_{ \frac{k}{2}}\right) &\mbox{ if } \lambda= 0 \mbox{ and $k$ is even}\\
S_{\left\lfloor \frac{k}{2}\right\rfloor}  \oplus S_{\left\lceil \frac{k}{2}\right \rceil} &\mbox{ if } \lambda= 0 \mbox{ and $k$ is odd}\\
S_k\oplus (-S_k)&\mbox{ if } \lambda^2 <0\\
S_{2k}&\mbox{ otherwise},
\end{cases}
\]
where $ \lceil a\rceil$ denotes the smallest integer not less than $a$ and $\lfloor a\rfloor$ denotes the largest integer not larger than $a$. For the following theorem, we let $\{\lambda_1,\lambda_2,\ldots,\lambda_\gamma\}$ denote the subset of principle square roots of eigenvalues of $A^2$ enumerated in section \ref{The canonical forms section}. 


\begin{theorem}\label{alt simultaneous canonical form theorem}
The domain of an $\ell$-self-adjoint antilinear operator $A$ can be decomposed into $A$-invariant, pairwise $\ell$-orthogonal subspaces such that there exists a basis with respect to which the restrictions of $\ell$ and $A$ to the decomposition's component subspaces are represented by matrices of the form $\pm N_{\lambda,k}$ and $M_{\lambda,k}$ where $\lambda\in\{\lambda_1,\lambda_2,\ldots,\lambda_\gamma\}$ and $k\in\N$. The corresponding block  diagonal matrices representing $\ell$ and $A$ are unique up to a permutation of the blocks on the diagonal.
\end{theorem}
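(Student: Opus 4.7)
The plan is to implement the alternative approach sketched at the start of Section \ref{alter_section}: bring $(\ell, A^2)$ to the Gohberg--Lancaster--Rodman canonical form first, and then refine the basis within the stabilizer of this form so that $A$ itself takes the shape $M_{\lambda,k}$. As in Section \ref{antilinear operators special cases}, the decomposition \eqref{Cn decomposition} reduces the problem to the restriction of $A$ to a single generalized eigenspace $W_\lambda^{(n)}$, and the GLR theorem applied to $(\ell, A^2|_{W_\lambda^{(n)}})$ further splits the problem into blocks corresponding to individual Jordan chains of $A^2$.

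I would treat the nonzero cases first. On a block where $\ell$ is represented by $\pm S_k$ (respectively $\pm S_{2k}$ for nonreal or negative $\lambda^2$) and $A^2$ is represented by $J_{\lambda^2,k}$ (respectively $J_{\lambda^2,k}\oplus J_{\overline{\lambda^2},k}$), the matrix $C$ for $A$ must satisfy $C\overline C$ equal to the GLR matrix for $A^2$ together with the symmetry condition coming from \eqref{cond}. The first step is to verify by direct computation that $M_{\lambda,k}$ solves this system: for $\lambda\in \R\setminus\{0\}$ the recursion \eqref{catalan-like sequence} was designed precisely so that $(M_{\lambda,k})^2 = J_{\lambda^2,k}$, and for nonreal $\lambda^2$ the block-antidiagonal structure of $M_{\lambda,k}$ yields $(M_{\lambda,k})^2 = J_{\lambda^2,k}\oplus J_{\overline{\lambda^2},k}$ automatically. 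Next, I would show that the stabilizer group $G$ from \eqref{symmetry group preserving GLR} acts transitively on the solution set $\mathcal{C}$, with $M_{\lambda,k}$ as a distinguished orbit representative. In the positive real case, this amounts to parametrizing $\mathcal{C}$ (solutions turn out to be upper-triangular Toeplitz, and the solution space is topologically $S^1 \times \R^{k-1}$) and then constructing an element of $G$ that moves a general $C$ to $M_{\lambda,k}$ by successively matching superdiagonal entries, in a manner analogous to Lemma \ref{normalizing ell for lambda>0}; the negative and nonreal cases proceed by parallel computations on the appropriate block-antidiagonal presentations.

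The case $\lambda=0$ is structurally different and must be handled separately, because the shape of $M_{0,k}$ depends on the parity of $k$. A direct calculation shows that for even $k$, $(M_{0,k})^2$ is conjugate to $J_{0,k/2}\oplus J_{0,k/2}$, and for odd $k$ it is conjugate to $J_{0,(k+1)/2}\oplus J_{0,(k-1)/2}$. Thus a single $M_{0,k}$ block corresponds to a pair of equal- or near-equal-size Jordan blocks of $A^2$, so one has to identify such pairs within the GLR decomposition of $(\ell, A^2)$ before applying the normalization. The analysis in Subsection \ref{zero eigenvalue subsection} supplies the needed tools; the main adaptation is to track pairs of Jordan chains of $A^2$ interleaved by $A$ rather than individual chains, and to verify that two blocks of $A^2$ of compatible sizes can always be combined into a single $M_{0,k}$ block with the correct sign for $\ell$. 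Uniqueness of the block structure up to permutation is then inherited from the uniqueness of the GLR form of $(\ell,A^2)$ together with the sign data.

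The main obstacle will be the transitivity of $G$ on $\mathcal{C}$ in the positive real case, combined with the bookkeeping of Jordan-block pairings in the $\lambda=0$ case. An alternative and arguably more mechanical route, which I would fall back on if the transitivity computation becomes unwieldy, is to accept Theorem \ref{simultaneous canonical form theorem} as already proved and construct, block by block, an explicit basis change that transforms each $(\pm H_{\lambda,k}, C_{\lambda,k})$ block of Theorem \ref{simultaneous canonical form theorem} into the corresponding $(\pm N_{\lambda,k}, M_{\lambda,k})$ block, thereby deriving Theorem \ref{alt simultaneous canonical form theorem} as a direct corollary of Theorem \ref{simultaneous canonical form theorem}.
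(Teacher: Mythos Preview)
Your proposal is correct and follows essentially the same route as the paper: the paper does not give a formal proof of Theorem~\ref{alt simultaneous canonical form theorem} but only the sketch in Section~\ref{alter_section}, namely bring $(\ell,A^2)$ to Gohberg--Lancaster--Rodman form and then normalize $A$ within the stabilizer $G$ of \eqref{symmetry group preserving GLR}, with the equivalence to Theorem~\ref{simultaneous canonical form theorem} via explicit block-by-block basis changes recorded in the closing Remark as the alternative justification. Your primary plan and your fallback match these two threads exactly; the only point on which you go beyond the paper's sketch is the $\lambda=0$ bookkeeping (pairing Jordan blocks of $A^2$), which the paper does not discuss in Section~\ref{alter_section} and which is indeed most cleanly handled by your fallback route through Theorem~\ref{simultaneous canonical form theorem}.
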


A canonical form for antilinear operators, described in Remark \ref{Hong--Horn} and section \ref{A canonical form for antilinear operators}, is given by Hong and Horn in \cite[Theorem 3.1]{hong1988canonical}. Since, as is noted in Lemma \ref{every operator is self-adjoint}, every antilinear operator is $\ell$-self-adjoint with respect to some nondegenerate Hermitian form $\ell$, by applying Theorem \ref{alt simultaneous canonical form theorem} to the pair $\ell$ and $A$ to get another matrix representation for $A$, we obtain the following alternative canonical form for antilinear operators.

\begin{theorem}\label{canonical form theorem}
The domain of an antilinear operator $A$ can be decomposed into $A$-invariant subspaces such that there exists a basis with respect to which the restriction of $A$ to the decomposition's component subspaces are represented by matrices of the form $M_{\lambda,k}$ where $\lambda\in\{\lambda_1,\lambda_2,\ldots,\lambda_\gamma\}$ and $k\in\N$. The corresponding block diagonal  matrix representing $A$ is unique up to a permutation of the blocks on the diagonal.
\end{theorem}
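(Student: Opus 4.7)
The plan is to deduce this statement as a direct corollary of Theorem \ref{alt simultaneous canonical form theorem}, combined with Lemma \ref{every operator is self-adjoint}. Given an arbitrary antilinear operator $A$ on $\C^n$, I would first invoke Lemma \ref{every operator is self-adjoint} to produce some nondegenerate Hermitian form $\ell$ for which $A$ is $\ell$-self-adjoint. Then Theorem \ref{alt simultaneous canonical form theorem} furnishes an $A$-invariant decomposition of $\C^n$ and a compatible basis such that $A$ is represented by a block-diagonal matrix whose blocks have the prescribed form $M_{\lambda,k}$. Forgetting the form $\ell$ gives the existence part of Theorem \ref{canonical form theorem}; one simply drops the pairwise $\ell$-orthogonality of the summands, which the statement does not claim.

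For uniqueness up to permutation of blocks, the key idea is that $A^2$ is a linear operator, so its Jordan canonical form is a rigid invariant of $A$. I would verify, case by case from the definition of $M_{\lambda,k}$, that the Jordan type of $M_{\lambda,k}^2$ determines the pair $(\lambda,k)$ under the principal-square-root convention: for $\lambda\in\R\setminus\{0\}$ the identity $M_{\lambda,k}^2=J_{\lambda^2,k}$ is built into the coefficients $c_i(\lambda)$ defined in \eqref{catalan-like sequence}; for $\lambda^2\notin\R$ the $2k\times 2k$ matrix $M_{\lambda,k}^2$ is similar to $J_{\lambda^2,k}\oplus J_{\overline{\lambda^2},k}$; and for $\lambda=0$ one computes $M_{0,k}^2$ directly in the even- and odd-$k$ subcases. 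With these Jordan types in hand, the uniqueness of the Jordan canonical form of $A^2$ pins down the multiset of pairs $\{(\lambda_j,k_j)\}$ appearing in any valid decomposition.

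The anticipated main obstacle is the uniqueness argument at $\lambda=0$, where $A$ is nilpotent and the principal-square-root recipe is vacuous. Here one must check that the map $k\mapsto(\text{Jordan type of }M_{0,k}^2)$ is injective, separating different values of $k$ and distinguishing even from odd $k$. The block-matrix definitions of $M_{0,k}$ (for even $k$ built from $J_{\pm 1,k/2}$, and for odd $k$ involving identities in a cyclic pattern) each produce a specific nilpotent Jordan structure under squaring; once tabulated, they match uniquely against the nilpotent Jordan type of $A^2|_{W_0^{(n)}}$. An alternative, perhaps cleaner, route would be to appeal directly to the uniqueness portion of the Hong--Horn canonical form stated in Remark \ref{Hong--Horn}, since the existence construction above sets up a natural bijection between $M_{\lambda,k}$-blocks and Hong--Horn $C_{\lambda,k}$-blocks (block sizes and values of $\lambda$ correspond), and uniqueness then transfers through this bijection.
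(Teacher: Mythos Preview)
Your approach is exactly the paper's: the paragraph immediately preceding Theorem \ref{canonical form theorem} derives it from Lemma \ref{every operator is self-adjoint} and Theorem \ref{alt simultaneous canonical form theorem} just as you propose, and the subsequent Remark invokes the Jordan form of $A^2$ for uniqueness along the lines you sketch. One small slip to fix: because $A$ is antilinear, the linear operator $A^2$ is represented by $M_{\lambda,k}\overline{M_{\lambda,k}}$, not $M_{\lambda,k}^2$; for nonreal $\lambda$ these differ (the latter has eigenvalue $|\lambda|^2$, not $\lambda^2$ and $\overline{\lambda^2}$), but with the correct product your case-by-case identification with Jordan blocks goes through as intended.
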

\begin{remark}
In a basis with respect to which $A$ is represented by a matrix with the above canonical form, $A^2$ is represented by a Jordan matrix. Similarly, if $\ell$ and $A$ are represented by matrices in the canonical form of Theorem \ref{alt simultaneous canonical form theorem} then the pair $(\ell,A^2)$ is represented by matrices in the Gohberg--Lancaster--Rodman form. Noting this connection together with Lemma \ref{GLR invariant lemma}, one can readily show that if $A$ is nonsingular then Theorems \ref{simultaneous canonical form theorem} and \ref{alt simultaneous canonical form theorem} are indeed equivalent. To show that each of these theorems is a consequence of the other in the more general case where $A$ is singular, it is not too difficult to explicitly construct a basis change of the maximal subspace on which $A$ is nilpotent transforming the canonical form in Theorem \ref{simultaneous canonical form theorem} to the form in Theorem \ref{alt simultaneous canonical form theorem} (and vice versa);
 for example, considering a change of basis transformation represented by 
\[
T:=
\frac{1}{\sqrt{2}}\left(
\begin{array}{cccccc}
    1 & 1 & 0 & 0 & 0 & 0  \\
    0 & 0 & 1 & 1 & 0 & 0  \\
    0 & 0 & 0 & 0 & 1 & 1  \\
    -1 & 1 & 0 & 0 & 0 & 0  \\
    0 & 0 & -1 & 1 & 0 & 0  \\
    0 & 0 & 0 & 0 & -1 & 1
\end{array}
\right),
\]
we have $(T^{-1})^*H_{0,6} T^{-1}=N_{0,6}$ and $TC_{0,6}\overline{T}^{-1}=M_{0,6}$, that is, this change of basis transforms a certain matrix representation given by Theorem \ref{simultaneous canonical form theorem} to a matrix representation given by Theorem \ref{alt simultaneous canonical form theorem}.
\end{remark}

\bibliographystyle{cell}
\bibliography{biblio}
\end{document}